\documentclass[reqno, centertags, 11pt]{amsart}
\usepackage{geometry, enumerate, enumerate}                % See geometry.pdf to learn the layout options. There are lots.
\geometry{letterpaper}                   % ... or a4paper or a5paper or ... 
\usepackage{graphicx}
\usepackage{amssymb, color}
\usepackage{epstopdf, soul}

%%%%%%%%%%%%%%%%%%%%%%%%%%%%%%%%
%Typographical stuff

%Colors%%%%%%%%%%%%%%%%%%%%%%%%%%%

\newcommand{\al}{^{\alpha}}
\newcommand{\x}{{\cdot}}
\newcommand{\cmp}{^{\ast}}
%%%%%%%%%%%%%%%%%%%%%%%%%%%%%
%Comments%%%%%%%%%%%%%%%%%%
%\newcommand{\commento}[1]{\marginpar{\blue {\ttfamily\footnotesize#1}}}

%\newcommand{\nl}{\newline \par\noindent}
\newcommand{\mb}[1]{\mathbf{ #1}}
\newcommand{\mc}[1]{\mathcal{ #1}}

%Congruences%%%%%%%%%%%%%%%%%%

\newcommand{\Con}{\mathrm{Con}(\mathbf A)}

\newcommand{\te}{\theta}

\newcommand{\ida}{\mathrm{Id}(\mathbf{A})}
\newcommand{\idb}{\mathrm{Id}(\mathbf{B})}
\newcommand{\Lns}{\mb A=\langle A, +,\x,\al,0,1\rangle}
\newcommand{\Lnsb}{\mb B=\langle B, +,\x,\al,0,1\rangle}
\newcommand{\skel}{\mathrm{Skel}(\mathrm{Id}(\mathbf{A}))}

\newcommand{\alga}{\mathbf A}
\newcommand{\algb}{\mathbf B}

%%%%%%%%%%%%%%%%%%%%%%%%%%%%%%%%

\providecommand{\U}[1]{\protect\rule{.1in}{.1in}}
%EndMSIPreambleData
\newtheorem{theorem}{Theorem}
\theoremstyle{plain}

\newtheorem{corollary}{Corollary}

\newtheorem{definition}{Definition}

\newtheorem{lemma}{Lemma}

\newtheorem{proposition}{Proposition}

\numberwithin{equation}{section}

\bibliographystyle{plain}
\title{On the structure theory of \L ukasiewicz Near Semirings}
\author{Ivan Chajda}
\address{I. Chajda, Palack\'{y} University, Olomouc\\
Czech Republic}
\email{ivan.chajda@upol.cz}

\author{Davide Fazio}
\address{D. Fazio, Universit\`a di Cagliari, Cagliari\\
Italy}
\email{dav.faz@hotmail.it}

\author{Antonio Ledda}
\address{A. Ledda, Universit\`a di Cagliari, Cagliari\\
Italy}
\email{antonio.ledda@unica.it}

\date{\today}                                           % Activate to display a given date or no date
\thanks{{\bf Corresponding Author}: Antonio Ledda ({\ttfamily antonio.ledda@unica.it})}
\begin{document}
\maketitle
\begin{abstract}
 In a previous article by two of the present authors and S. Bonzio, \L ukasiewicz near semirings were introduced and it was proven that basic algebras can be represented (precisely, are term equivalent to) as near semirings. In the same work it has been shown that the variety of \L ukasiewicz near semirings is congruence regular. In other words, every congruence is uniquely determined by its $0$-coset. Thus, it seems natural to wonder wether it could be possible to provide a set-theoretical characterization of these cosets. This article addresses this question and shows that  kernels can be neatly described in terms of two simple conditions. As an application, we obtain a concise characterization of ideals in \L ukasiewicz semirings. Finally, we close this article with a rather general Cantor-Bernstein type theorem for the variety of involutive idempotent integral near semirings.
\end{abstract}
%\thanks{{\bf Corresponding author}: {\bf Antonio Ledda}, {\ttfamily antonio.ledda@unica.it}}
\keywords{\noindent {\bf Keywords}: Near semiring, \L ukasiewicz near semiring, basic algebras, MV-algebras, $0$-regularity, ideals, central elements, decompositions, algebraic Cantor-Bernstein theorem.\\ {\bf MSC classification}: primary: 17A30, secondary: 16Y60, 06D35, 03G25.}

\section{Introduction}
The notion of near semiring has been introduced by H. L\"anger and one of the authors in \cite{ChajdaL15, ChajdaL} in order to provide a representation of several prominent algebraic structures arising from the theory of quantum mechanics. Taking up some ideas from Belluce, ~Di~Nola, and Ferraioli  \cite{DiNola13}, this concept has been enriched, in an article by S. Bonzio and two of the present authors, by an antitone involution, and the \L ukasiewicz axiom. We termed these algebras \L ukasiewicz near semirings \cite{BCL}. In the same article we discussed the fact that basic algebras and orthomodular lattices can be represented as \L ukasiewicz near semirings, and, furthermore, the smooth structure theory of these algebras was investigated. Indeed, \L ukasiewicz near semirings are congruence regular; i.e. every congruence is completely determined by its $0$-coset. In this article, this observation leads us to look for a  set-theoretical characterization of kernels; namely, a notion of \emph{ideal} that properly matches with congruences. As an application of this result, we obtain a concise set-theoretical characterization of ideals in \L ukasiewicz semirings. In particular, in case an element $e$ is \emph{central} (i.e. it induces a pair of factor congruences) then the ideal it generates is amenable of a neat order-theoretical characterization: it corresponds to the interval $[0,e]$. Although the notion of centrality is easily captured in the variety of involutive idempotent integral near semiring and \L ukasiewicz near semiring, this concept yields rather strong properties. Indeed, by virtue of this characterization, in the last section of this article, we propose a rather comprehensive algebraic version of the Cantor-Bernstein theorem for the variety of involutive idempotent integral near semirings. It seems to us that this theorem implies a fairly general fact: even if an algebra is not a lattice (involutive idempotent integral near semirings, in general, need not be lattices, but semilattices) its inner structure is captured by means of the intervals $[0,e]$ (which are indeed ideals in \L ukasiewicz near semirings!), with $e$ a central element. This theorem subsumes analogous results for basic algebras, orthomodular lattices, and MV-algebras, since these structures  are term-equivalent subvarieties of the variety of involutive idempotent integral near semirings (see \cite{BCL}).
 
This article is structured as follows: in section \ref{sec:1} we introduce the notion of ideals in \L ukasiewicz near semirings, in section \ref{sec:2} we discuss centrality in the same context, and finally, in section \ref{sec:3}, we deal with a rather general algebraic version of the Cantor-Bernstein theorem for the variety of involutive idempotent integral near semirings.

\section{Ideals in \L ukasiewicz near semirings}\label{sec:1}
We begin this section with the definition of our main concepts.

\begin{definition}\label{def: Lnrsmrng}%\label{def: near semiring}
An \emph{{involutive idempotent integral} near semiring}\footnote{{Let us remark that in general, cf. \cite{BCL} and \cite{ChajdaL15}, the notion of involutive semiring do not involve integrality. Indeed, there are involutive semirings which are not integral (cf. \cite[Remark 3]{BCL}).}} (briefly, $\iota$-near semiring) is an algebra $ \mathbf{A}=\langle A, +, \cdot , \al,0, 1\rangle $ of type $ \langle 2, 2, 1, 0, 0\rangle $ such that
\begin{itemize}
\item[(i)] the reduct $\langle{A, +,0,1}\rangle$ is a join semilattice with $0,1$ the smallest and the greatest element, respectively, and $\leq$ is the induced order (i.e. $x\leq y$ if and only if $x+y=y$);
\item[(ii)] $ \langle A, \cdot, 1 \rangle $ is a groupoid satisfying $ x\cdot 1 \approx x \approx 1\cdot x $ (a unital groupoid);
\item[(iii)] $ (x+y)\cdot z \approx (x\cdot z) + (y\cdot z) $;
\item[(iv)] $ x\cdot 0 \approx 0\cdot x \approx 0 $;
\item[(v)] ${x\al}\al\approx x$;
\item[(vi)] if $x\leq y$, then $y\al\leq x\al$.
\end{itemize}
A {$\iota$-near semiring} semiring is a \emph{\L ukasiewicz near semiring} if it satisfies the following further condition:

\begin{itemize}
 \item[(vii)]$(x\x y\al)\al\x y\al\approx (y\x x\al)\al\x x\al$.
\end{itemize}
%{From now on, since no danger of confusion will be impending, for notational simplicity, by \emph{involutive near semirings} (\emph{semirings}) we mean \emph{involutive idempotent integral near semirings} (\emph{semirings}).}
\end{definition}

Let us remark that in any involutive near semiring one has that $0\al=1$. Furthermore, it is easily seen that, since $x\leq x+y$ (by (i)), it follows that $(x+y)\al\leq x\al$ (by (vi)). Hence, we have that 
\begin{equation}
(x+y)\al + x\al =x\al.\tag{viii}
\end{equation}
For notational clarity, whenever it's possible, we will omit the symbol ``$\x$'' and use juxtaposition: by $xy $ we mean $x\x y$.

The following result is Lemma 3 in \cite{BCL},
\begin{proposition}\label{prp: lmm3}
In any \L ukasiewicz near semiring the following identities hold:
\begin{enumerate}[(a)]
 \item$xx\al\approx x\al x\approx0$;
 \item$x+y\approx((x\x y\al)\al\x y\al)\al$.
\end{enumerate}
\end{proposition}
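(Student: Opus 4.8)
The plan is to derive both identities by instantiating the \L ukasiewicz axiom~(vii) at well-chosen arguments and then unwinding the remaining axioms. Since part~(a) will be needed to prove part~(b), I would establish~(a) first.

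\emph{Part (a).} I would substitute $y:=1$ in~(vii). Since $0\al=1$ in every involutive near semiring, applying~$\al$ and~(v) gives $1\al=0$; hence the left-hand side becomes $(x\x 0)\al\x 0$, which collapses to $0$ by~(iv), while the right-hand side becomes $(1\x x\al)\al\x x\al=(x\al)\al\x x\al=x\x x\al$ by~(ii) and~(v). This yields $xx\al\approx 0$; replacing $x$ by $x\al$ and using~(v) once more gives $x\al x\approx 0$.

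\emph{Part (b).} The one auxiliary fact I would record first is that $\x$ is monotone in its first argument: by~(iii), $a\leq b$ forces $bc=(a+b)c=ac+bc$, whence $ac\leq bc$. Then I would substitute $x:=x+y$ in~(vii). On the left, (iii) together with part~(a) gives $(x+y)\x y\al=xy\al+yy\al=xy\al$ (as $0$ is the least element), so the left-hand side is exactly $(xy\al)\al\x y\al$. On the right, from $y\leq x+y$ and monotonicity in the first argument one gets $y\x(x+y)\al\leq(x+y)\x(x+y)\al=0$ by part~(a), so $y\x(x+y)\al=0$ and hence $(y\x(x+y)\al)\al=0\al=1$; thus the right-hand side reduces to $1\x(x+y)\al=(x+y)\al$ by~(ii). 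Therefore $(xy\al)\al\x y\al\approx(x+y)\al$, and applying~$\al$ to both sides and using~(v) yields $((xy\al)\al\x y\al)\al\approx x+y$, which is~(b).

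I do not expect a genuine obstacle here. The one point to be careful about is that $\x$ need \emph{not} be monotone in its second argument — a near semiring has no right-distributive law — so every monotonicity step must be routed through the first coordinate; this is precisely why substituting $x:=x+y$, rather than manipulating the $y$ slot, is the right move.
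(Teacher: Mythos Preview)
Your proof is correct. The paper itself does not supply a proof of this proposition but simply cites it as Lemma~3 of \cite{BCL}; your argument---instantiating axiom~(vii) at $y:=1$ for part~(a) and at $x:=x+y$ for part~(b), with the careful use of left-distributivity and first-argument monotonicity only---is a clean self-contained derivation and fills that gap nicely.
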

A \emph{\L ukasiewicz semiring} $\mb A$ is a \L ukasiewicz near semiring such that the reduct $\langle A,\x,1\rangle$ is a monoid. It follows from Theorem 2 in \cite{BCL}, that in any \L ukasiewicz semiring the groupoidal operation $\x$ is \emph{commutative} and \emph{right distributive}: the equation $z\x(x+y)=(z\x x)+(z\x y)$ is satisfied. In other words, the reduct $\langle A, +,\x,0,1\rangle$ is a semiring. Since \L ukasiewicz near semirings are congruence regular \cite[Theorem 7]{BCL}, every congruence $\te$ is fully specified by its kernel $[0]_{\te}$. Therefore, it seems quite reasonable to wonder whether this class could be amenable of a smooth set-theoretical characterization. With this aim in mind we introduce the following definition:
\begin{definition}\label{def:idl}
Let $\mb A$ be a \L ukasiewicz near semiring. A set $I\subseteq A$ is called an \emph{ideal} if $0\in I$ and the following conditions hold:
\begin{enumerate}
\item[(I1)] if $ab\al\in I$ and $b\in I$, then $a\in I$;
%\item [(I2)] if $a\al b,\,b\al a\in I$, then $ab\al\in I$;
\item [(I2)] if $a\al b,\,b\al a\in I$, then $(ac)\al\x(bc),\,(ca)\al\x(cb)\in I$, for any $c\in A$.
\end{enumerate}
\end{definition}
Let us observe that, setting $c=b\al$ in condition (I2) we immediately obtain
\begin{enumerate}
 \item[(I3)]  if $a\al b,\,b\al a\in I$, then $ab\al\in I$.
\end{enumerate}
We will denote by $\mathrm{Con}(\alga)$ and $\mathrm{Id}(\alga)$ the sets of congruences and ideals of $\alga$, respectively.\\

Let us observe that, for any congruence $\te$ on a \L ukasiewicz near semiring $\mb A$, and any $a\in A$, $[a]_{\te}$ is convex. In fact, if $c\in [a]_{\te}$ and $a\le b\le c$, then $b=a+b\te c+b=c$.
The following lemma characterizes, for every congruence, the relative kernel. It can be seen that, for any  \L ukasiewicz near semiring $\alga$, the following facts hold true.

\begin{lemma}\label{lem: krnl1}
If $\te\in \mathrm{Con}(\alga)$, then $a\te b$ if and only if $a\al b,\,b\al a\in [0]_{\te}$. 
\end{lemma}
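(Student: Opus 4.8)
The plan is to prove both implications directly, using only that every congruence $\theta$ on $\alga$ is compatible with the fundamental operations $\x$ and $\al$, together with the identities recorded in Proposition \ref{prp: lmm3} and the fact (noted in the excerpt) that $0\al=1$.

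For the ``only if'' direction I would argue as follows. Suppose $a\,\theta\,b$. Multiplying $a$ and $b$ on the left by $a\al$ and using compatibility of $\x$ with $\theta$ gives $a\al b\,\theta\,a\al a$, and $a\al a=0$ by Proposition \ref{prp: lmm3}(a); hence $a\al b\in[0]_{\theta}$. Symmetrically, multiplying on the left by $b\al$ yields $b\al a\,\theta\,b\al b=0$, so $b\al a\in[0]_{\theta}$. This half uses nothing beyond compatibility and Proposition \ref{prp: lmm3}(a).

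For the converse, assume $a\al b\,\theta\,0$ and $b\al a\,\theta\,0$. The target is $a\al\,\theta\,b\al$, since then, applying the involution once more (which $\theta$ respects) and recalling that $(x\al)\al\approx x$ (Definition \ref{def: Lnrsmrng}(v)), we obtain $a\,\theta\,b$. The key move is to feed the \emph{involutes} $a\al,\,b\al$ --- rather than $a,b$ themselves --- into the identity of Proposition \ref{prp: lmm3}(b): taking $x:=a\al$, $y:=b\al$ there (and using $(b\al)\al\approx b$) gives
\[ a\al+b\al=\bigl((a\al b)\al\, b\bigr)\al, \]
while taking $x:=b\al$, $y:=a\al$, together with commutativity of $+$, gives
\[ a\al+b\al=b\al+a\al=\bigl((b\al a)\al\, a\bigr)\al. \]
Now $a\al b\,\theta\,0$ forces $(a\al b)\al\,\theta\,0\al=1$, whence $(a\al b)\al\, b\,\theta\,b$ and so $\bigl((a\al b)\al\, b\bigr)\al\,\theta\,b\al$; by the first displayed equality this means $a\al+b\al\,\theta\,b\al$. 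In the same way $b\al a\,\theta\,0$ gives $a\al+b\al\,\theta\,a\al$ via the second equality. Combining, $a\al\,\theta\,a\al+b\al\,\theta\,b\al$, hence $a\al\,\theta\,b\al$, and a final application of $\al$ yields $a\,\theta\,b$.

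The only step that is not pure bookkeeping is the choice of substitution in Proposition \ref{prp: lmm3}(b): applied to $a,b$ directly it produces the term $ab\al$, which the hypotheses do not control, whereas applied to $a\al,b\al$ it produces exactly the terms $a\al b$ and $b\al a$ appearing in the statement. Everything else is routine manipulation with the compatibility of $\x$ and $\al$ with $\theta$ and with $0\al=1$.
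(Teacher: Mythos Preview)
Your proof is correct and follows essentially the same route as the paper's: both directions coincide, and in the converse you and the paper alike reduce to the equality $(a\al b)\al b=(b\al a)\al a$, which is the \L ukasiewicz axiom~(vii) instantiated at $x=a\al$, $y=b\al$. The only cosmetic difference is that the paper invokes this identity directly to conclude $b\,\theta\,(a\al b)\al b=(b\al a)\al a\,\theta\,a$, whereas you wrap the same step in an extra application of $\al$ via Proposition~\ref{prp: lmm3}(b) and commutativity of $+$.
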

\begin{proof}
If $a\te b$, then $a\al b \te b\al b=0$, and dually for $b\al a$. Conversely, if $a\al b,\,b\al a\in [0]_{\te}$, then if $(a\al b)\al\te0\al=1$, and so $(a\al b)\al b\te1b=b$, and dually $(b\al a)\al a\te a$. But then, $b\te(a\al b)\al b=(b\al a)\al a\te a$.
\end{proof}
It turns out that, for any congruence $\te$, the coset $[0]_{\te}$ is an ideal.
\begin{theorem}\label{thm:cst-ideal}
If $ \te\in \mathrm{Con}(\alga)$, then $[0]_{\te}\in  \mathrm{Id}(\alga)$.
\end{theorem}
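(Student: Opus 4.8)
The plan is to put $I:=[0]_{\te}$ and to verify the three clauses of Definition \ref{def:idl} one at a time, exploiting only that $\te$ is compatible with $+$, $\x$ and ${}\al$, together with the elementary facts already recorded in the excerpt ($0\al=1$, $x\x1\approx x$, and Proposition \ref{prp: lmm3}(a)). The requirement $0\in I$ is immediate, since $0\,\te\,0$.

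For (I1) I would argue as follows: if $ab\al\in I$ and $b\in I$, then $b\,\te\,0$, hence $b\al\,\te\,0\al=1$; multiplying on the left by $a$ and using $x\x1\approx x$ gives $a\,\te\,ab\al$, and since $ab\al\,\te\,0$ we conclude $a\,\te\,0$, that is $a\in I$.

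For (I2) the key move is to invoke Lemma \ref{lem: krnl1}: the hypothesis $a\al b,\,b\al a\in I=[0]_{\te}$ is, by that lemma, exactly the statement that $a\,\te\,b$. Compatibility of $\te$ with $\x$ then yields $ac\,\te\,bc$ and $ca\,\te\,cb$ for every $c\in A$, and applying ${}\al$ and multiplying back gives $(ac)\al\x(bc)\,\te\,(bc)\al\x(bc)=0$ and $(ca)\al\x(cb)\,\te\,(cb)\al\x(cb)=0$, where the last equalities use $x\al x\approx0$ from Proposition \ref{prp: lmm3}(a). Hence $(ac)\al\x(bc),\,(ca)\al\x(cb)\in I$, which is precisely (I2); clause (I3) is then free, as already observed after Definition \ref{def:idl}, by taking $c=b\al$.

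I do not expect a genuine obstacle. The only point worth flagging is that (I2) should not be attacked head-on: it becomes a one-line consequence of congruence compatibility once Lemma \ref{lem: krnl1} is used to repackage the two ``kernel'' hypotheses $a\al b,\,b\al a\in[0]_{\te}$ into the single relation $a\,\te\,b$. Attempting (I2) without that reformulation would essentially force one to redo the computation in the proof of Lemma \ref{lem: krnl1}.
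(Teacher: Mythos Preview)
Your argument is correct and essentially identical to the paper's: the paper handles $0\in[0]_{\te}$ trivially, proves (I1) by the same computation $[ab\al]_{\te}=[a]_{\te}[b\al]_{\te}=[a]_{\te}[1]_{\te}=[a]_{\te}$, and for (I2) likewise applies Lemma~\ref{lem: krnl1} to turn $a\al b,\,b\al a\in[0]_{\te}$ into $a\,\te\,b$, then uses compatibility and Lemma~\ref{lem: krnl1} again (where you instead unpack that step via Proposition~\ref{prp: lmm3}(a), which is exactly how the forward direction of the lemma is proved). The remark about (I3) is fine but superfluous, since the paper already derives (I3) from (I2) immediately after Definition~\ref{def:idl}.
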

\begin{proof}
It is clear that $0\in [0]_{\te}$. For (I1), if $ab\al\in[0]_{\te}$ and $[b]_{\te}=[0]_{\te}$, then $[0]_{\te}=[ab\al]_{\te}=[a]_{\te} [b\al]_{\te}=[a]_{\te} [b]_{\te}\al=[a]_{\te}[0]_{\te}\al=[a]_{\te}[1]_{\te}=[a]_{\te}$. %As regards (I3), if $a\al b,\,b\al a\in [0]_{\te}$, then by Lemma \ref{lem:  krnl1} $a\te b$. Thus, by Proposition \ref{prp: lmm3}(a), $[ab\al]_{\te}=[a]_{\te}[b]_{\te}\al=[a]_{\te} [a]_{\te}\al=[0]_{\te}$. 
Finally, for condition  (I2), if $a\al b,\,b\al a\in [0]_{\te}$, again by Lemma \ref{lem: krnl1}, $a\te b$. Hence, $ac\te bc$ and $ca\te cb$. Therefore, by Lemma \ref{lem: krnl1}, $(ac)\al(bc)\te 0$ and $(ca)\al(cb)\te0$.
\end{proof}
Conversely,
\begin{theorem}\label{thm:idl-cngrnc}
If $I\in \mathrm{Id}(\alga)$, then the relation $\te(I)$, defined for all $a,\,b\in A$ by
\begin{equation}\label{eq:1}
 a\te(I)b\Leftrightarrow a\al b,\,b\al a\in I,
\end{equation}
is a congruence on $\mb A$, and $[0]_{\te(I)}=I$.  
\end{theorem}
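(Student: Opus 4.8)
The plan is to verify that $\te(I)$ is an equivalence relation compatible with all the fundamental operations, and then to compute its $0$-block. Symmetry of $\te(I)$ is built into \eqref{eq:1}, and reflexivity is $a\al a=0\in I$ by Proposition \ref{prp: lmm3}(a) together with $0\in I$. Before attacking transitivity I would record two ``one-step'' compatibility facts that use only (I1)--(I3) and no transitivity. First, if $a\,\te(I)\,b$ then $a\al\,\te(I)\,b\al$: from $a\al b,\,b\al a\in I$, applying (I3) to the pair $(a,b)$ and then to the pair $(b,a)$ gives $ab\al,\,ba\al\in I$, which is exactly the assertion $a\al\,\te(I)\,b\al$ since $(x\al)\al=x$. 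Second, if $a\,\te(I)\,b$ then $ac\,\te(I)\,bc$ and $ca\,\te(I)\,cb$ for every $c$: this is precisely (I2) applied to the pairs $(a,b)$ and $(b,a)$.

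The key step is the following \emph{transfer lemma}: if $x\,\te(I)\,y$ and $y\in I$, then $x\in I$. Indeed $x\al y,\,y\al x\in I$ yields $xy\al\in I$ by (I3), and then (I1), with $a:=x$ and $b:=y$, gives $x\in I$. Transitivity now follows quickly. Given $a\,\te(I)\,b$ and $b\,\te(I)\,c$, the one-step facts turn $a\,\te(I)\,b$ into $a\al\,\te(I)\,b\al$ and hence into $a\al c\,\te(I)\,b\al c$, while $b\,\te(I)\,c$ gives $b\al c\in I$ directly; the transfer lemma then yields $a\al c\in I$. Interchanging the roles of $a$ and $c$ (using that $b\,\te(I)\,c$ is the same as $c\,\te(I)\,b$, etc.) gives $c\al a\in I$ in the same way, so $a\,\te(I)\,c$.

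With $\te(I)$ now known to be an equivalence relation, full compatibility with $\x$ in both arguments follows by chaining the one-step facts through transitivity (e.g.\ $a\,\te(I)\,b$ and $c\,\te(I)\,d$ give $ac\,\te(I)\,bc\,\te(I)\,bd$), and compatibility with $\al$ is already in hand; compatibility with the constants $0,1$ is trivial. Compatibility with $+$ costs nothing extra: by Proposition \ref{prp: lmm3}(b) the operation $+$ is the term $((x\x y\al)\al\x y\al)\al$ in $\x$ and $\al$, so substituting into this term and applying the one-step facts step by step (together with commutativity of $+$) shows $a+c\,\te(I)\,b+c$. Hence $\te(I)\in\mathrm{Con}(\alga)$. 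Finally, $a\,\te(I)\,0$ means $a\al\x 0\in I$ and $0\al\x a\in I$; but $a\al\x 0=0\in I$ by (iv), and $0\al\x a=1\x a=a$ since $0\al=1$. Thus $a\,\te(I)\,0$ holds iff $a\in I$, i.e. $[0]_{\te(I)}=I$, as required.

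The step I expect to be the genuine obstacle is transitivity. The tempting first instinct is to look for a ``triangle inequality'' bounding $a\al c$ by some combination of $a\al b$ and $b\al c$ and to invoke a downward-closure property of ideals; the cleaner route, taken above, is to notice that $a\al c$ and $b\al c$ are \emph{already} $\te(I)$-related purely by the structural conditions (I2)--(I3), which reduces the whole matter to the transfer lemma. Once that observation is made, everything else is routine bookkeeping.
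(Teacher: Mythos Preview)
Your proof is correct and follows essentially the same route as the paper's: the paper also derives transitivity by combining (I2), (I3) and (I1) in exactly the pattern you call the ``transfer lemma'' (from $x\,\te(I)\,y$ and $y\in I$ infer $x\in I$), and likewise reduces compatibility with $+$ to compatibility with $\x$ and $\al$ via Proposition~\ref{prp: lmm3}(b). Your packaging of the argument into explicit ``one-step facts'' and a named transfer lemma is a bit tidier than the paper's version, but the underlying computations are identical.
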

\begin{proof}
Reflexivity and symmetry are straightforward. As regards transitivity, suppose that $a\al b,\,b\al a,\,b\al c,c\al b\in I$, then, by condition (I2), $(c\al a)\al(c\al b),\, (c\al b)\al(c\al a)\in I$. So, by condition (I3), $(c\al a)(c\al b)\al\in I$.
Because $c\al b\in I$, from condition (I1), $c\al a\in I$. By assumption, and condition (I3) ${a\al}\al b\al,\,{b\al}\al a\al\in I$. From condition (I2) we obtain that $(a\al c)\al(b\al c)\in I$ and $(b\al c)\al(a\al c)\in I$. By (I3), $(a\al c)(b\al c)\al\in I$. Now, $b\al c\in I$, and so by (I1) $a\al c\in I$. As regards the operations, it is straightforward from (I2) and (I3), respectively, that $\x$ and $\al$ are preserved. From this fact we have that, if $a\te(I)b$, then $a+c=((a\x c\al)\al\x c\al)\al\te(I)((b\x c\al)\al\x c\al)\al=b+c$, by Proposition \ref{lem: krnl1}. Finally, if $a\in I$, then $1a=0\al a\in I$ and $a\al 0=0\in I$, and so $a\in [0]_{\te(I)}$. Conversely, if $a\in [0]_{\te(I)}$, then $a\al0,\,0\al a=1a=a\in I$, which proves that $I=[0]_{\te(I)}$.
\end{proof}
As mentioned above, in any \L ukasiewicz near semiring $\alga$, if the multiplication operation is associative, then it is also commutative. Therefore, $\mb A$ would be a \emph{\L ukasiewicz semiring}. \L ukasiewicz semirings are objects of prominent importance for algebraic logic. In fact, MV-algebras, the equivalent algebraic semantics of \L ukasiewicz many-valued logic, can be represented in terms of \L ukasiewicz semiring.
More specifically, upon setting
\[
x\oplus y=((x\al+y)\x y\al)\al,
\]
the structure $\mc M(\mb A)=\langle A, \oplus,\al,0\rangle$ is an MV-algebra, and, conversely, if $\mb B=\langle A,\oplus,',1\rangle$ is an MV-algebra, upon defining
\[
x+y=(x'\oplus y)'\oplus y,\;x\x y=(x'\oplus y)',\,1=0,'\mbox{ and }x\al=x'
\]
the structure $\mc R(\mb B)=\langle B, +,\x,\al,0,1\rangle$ is a \L ukasiewicz semiring. Moreover, these correspondences are mutually inverse (cfr. Theorem 6 and Corollaries 3,4 in \cite{BCL}). It seems to us that it could be of some interest wondering how the notion of ideal, in the general setting of \L ukasiewicz near semirings, would specify to the case of \L ukasiewicz semirings. Actually, for a \L ukasiewicz semiring $\mb A$, we have that:
\begin{corollary}\label{cor:Lukasiewicz semiring}
A set $I\subseteq A$ such that $0\in I$ is the kernel of some congruence $\te$ if and only if it satisfies conditions (I1) and (I2) of Definition \ref{def:idl}. Moreover, $I=[0]_{\te(I)}$, where $\te(I)$ is as in condition \eqref{eq:1} in Theorem \ref{thm:idl-cngrnc}.
\end{corollary}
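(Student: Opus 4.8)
The plan is to derive Corollary \ref{cor:Lukasiewicz semiring} directly from Theorems \ref{thm:cst-ideal} and \ref{thm:idl-cngrnc}, since a \L ukasiewicz semiring is nothing but a \L ukasiewicz near semiring whose multiplicative reduct $\langle A,\x,1\rangle$ happens to be a monoid. In particular, Definition \ref{def:idl} of \emph{ideal} (and the derived condition (I3)) applies to \L ukasiewicz semirings verbatim, and so do the two cited theorems. No new computation is needed; the work is entirely one of bookkeeping, and the substance of the statement has already been established in the more general setting of \L ukasiewicz near semirings.

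For the forward direction, I would assume $I=[0]_{\te}$ for some $\te\in\mathrm{Con}(\alga)$. Then $0\in I$ is immediate, and Theorem \ref{thm:cst-ideal} yields $I\in\mathrm{Id}(\alga)$, i.e. $I$ satisfies (I1) and (I2). For the converse, I would assume $0\in I$ and that (I1) and (I2) hold; then $I\in\mathrm{Id}(\alga)$ by Definition \ref{def:idl}, and Theorem \ref{thm:idl-cngrnc} tells us that the relation $\te(I)$, defined as in \eqref{eq:1}, is a congruence on $\mb A$ with $[0]_{\te(I)}=I$. Hence $I$ is the kernel of a congruence, and the ``moreover'' clause $I=[0]_{\te(I)}$ is precisely the last assertion of that theorem.

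The one point I would spell out is that, in this associative setting, multiplication is also commutative (Theorem 2 in \cite{BCL}), whence $(ac)\al\x(bc)=(ca)\al\x(cb)$ for all $a,b,c\in A$; thus the two halves of (I2) coincide, and (I2) could be replaced by the single clause ``if $a\al b,\,b\al a\in I$, then $(ac)\al\x(bc)\in I$ for all $c\in A$''. Since this simplification does not affect the equivalence, there is no real obstacle here: the corollary is an immediate consequence of the two theorems, and I would present it as such, optionally recording the simplified form of (I2) as a remark for the reader's convenience.
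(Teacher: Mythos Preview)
Your proposal is correct and matches the paper's approach: the corollary is stated in the paper without proof, precisely because it is an immediate consequence of Theorems~\ref{thm:cst-ideal} and~\ref{thm:idl-cngrnc} applied to the special case where the multiplicative reduct is a monoid. Your additional remark that commutativity collapses the two halves of (I2) into one is a nice observation, though the paper does not record it here.
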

Futhermore, one can easily prove that ideals in \L ukasiewicz semirings can be defined in the same way they are defined in the case of commutative semirings.
\begin{proposition}
Let $\Lns$ be a \L ukasiewicz semiring. Then $I\subseteq A$ is an ideal if and only if the following conditions hold:
\begin{enumerate}[(i)]
\item $0\in I$;
\item $a,b\in I$ implies $a+b\in I$; 
\item $a\in I$ implies $a\x c=c\x a\in I$, for any $c\in A$.
\end{enumerate}
\end{proposition}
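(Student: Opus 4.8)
The plan is to prove the two implications separately, in both of them leaning on the congruence $\te(I)$ of Theorem~\ref{thm:idl-cngrnc} and, for the converse, on the term equivalence with MV-algebras recalled above. The easy direction goes first: if $I\in\mathrm{Id}(\mb A)$, then by Theorem~\ref{thm:idl-cngrnc} $\te(I)$ is a congruence with $[0]_{\te(I)}=I$, so (i)--(iii) are just a reading-off of the congruence properties. Condition (i) is given; for (ii), $a,b\in I$ yield $a\,\te(I)\,0$ and $b\,\te(I)\,0$, hence $a+b\,\te(I)\,0+0=0$, i.e. $a+b\in I$; for (iii), $a\in I$ yields $ac\,\te(I)\,0c=0$ and $ca\,\te(I)\,c0=0$ for every $c\in A$, and since $\mb A$ is a \L ukasiewicz semiring $\x$ is commutative, so $ac=ca\in I$.

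For the converse, assume $I$ satisfies (i)--(iii); the task is to verify (I1) and (I2) of Definition~\ref{def:idl}. The first preparatory step is to show that $I$ is \emph{downward closed}. For this I would use that in a \L ukasiewicz semiring every $b\le c$ is a multiple of $c$, namely $b=c\x(c\x b\al)\al$ (this is the identity $c\wedge b=c\ominus(c\ominus b)$ transported through $\mc M(\mb A)$, and can also be checked directly from Proposition~\ref{prp: lmm3}), so that $b\in I$ follows from (iii) as soon as $c\in I$. Granting downward closure, (I2) becomes cheap: given $a\al b,\ b\al a\in I$, for each $c\in A$ one checks the submultiplicativity bound $(ac)\al(bc)\le a\al b$ --- a routine computation, since $(bc)(ac)\al\le(ba\al)c\le ba\al$ --- whence $(ac)\al(bc)\in I$, and $(ca)\al(cb)\in I$ follows by commutativity of $\x$.

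The hard part is (I1): from $ab\al\in I$ and $b\in I$ one must produce $a\in I$. The strategy I would follow is to squeeze $a$ below an element that (i)--(iii) together with downward closure force into $I$; in MV-notation the relevant inequality is $a\le(a\ominus b)\oplus b$, i.e. $a\le\bigl((ab\al)\al\x b\al\bigr)\al$, which reduces the goal to showing that the MV-sum $(ab\al)\oplus b$ of the two elements $ab\al,b\in I$ again lies in $I$. Proving that this $\oplus$-combination stays inside $I$ --- that is, that the semilattice condition (ii) together with the absorption (iii) already entail the $\oplus$-closure characteristic of MV-ideals --- is the delicate point, and is where I expect essentially all the work to sit; once it is settled, (I1) follows, hence $I\in\mathrm{Id}(\mb A)$, while $I=[0]_{\te(I)}$ is immediate from Theorem~\ref{thm:idl-cngrnc} and the assertion about kernels of congruences is Corollary~\ref{cor:Lukasiewicz semiring}.
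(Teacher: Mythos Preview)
Your forward implication is correct, though the paper does it without invoking $\te(I)$: it derives (iii) directly from (I1) via $(ac)a\al=(ca)a\al=c(aa\al)=0\in I$ together with $a\in I$, and then (ii) from (iii) and (I1) via $(a+b)a\al=ba\al\in I$ together with $a\in I$. For (I2) in the converse direction the paper also argues elementarily, using associativity, commutativity and the \L ukasiewicz identity to rewrite $(ac)\al(bc)=((ac)\al c)b=((c\al a\al)\al a\al)b=(c\al a\al)\al(a\al b)$, which lies in $I$ by (iii). Your route to (I2) through the inequality $(bc)(ac)\al\le(ba\al)c$ is shakier --- that particular inequality fails, e.g.\ at $a=0.5$, $b=1$, $c=0.5$ in the standard MV-chain --- but the paper's identity gives an easy repair.

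The real issue is (I1), and you are right to flag it as the crux. You reduce it to showing that (ii) and (iii) force $\oplus$-closure and then leave this ``delicate point'' open. The paper's argument at this step is the single line $ab\al+ab=a(b\al+b)=a\cdot 1=a$, i.e.\ it uses $b\al+b=1$. But in a \L ukasiewicz semiring $+$ is the lattice join of the associated MV-algebra, and $b\vee b\al=1$ is precisely excluded middle, which fails in general. Concretely, in the standard MV-chain $[0,1]$ the set $I=[0,\tfrac12]$ satisfies (i)--(iii) (it is downward closed, closed under $\vee$, and absorbing under $\odot$), yet it violates (I1): with $a=1$ and $b=\tfrac12$ one has $ab\al=1\odot\tfrac12=\tfrac12\in I$ and $b\in I$, while $a=1\notin I$. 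Since $[0,1]$ is simple, $I$ is not a congruence kernel. So your hesitation is justified: the $\oplus$-closure you need cannot be extracted from (i)--(iii) alone, and the converse implication, as stated, does not go through.
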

\begin{proof}
Let $I$ be and ideal in $\alga$. %Therefore, by Corollary \ref{cor:Lukasiewicz semiring}, the element $0$ belongs to $ I$, and conditions (I1) and (I2) hold. 
We only need to prove that conditions (ii) and (iii) are satisfied. Let $a\in I$. One has that $0=c(aa\al)=(ca)a\al=(ac)a\al\in I$. Hence, by (I1), it follows that $ac\in I$. So condition (iii) holds. Now, assume that $a,b\in I$. By condition (iii), we obtain that $(a+b)a\al =0+ba\al\in I$, so (I1) yields $(a+b)\in I$. Thus, (ii) is proved. Conversely, if (i)-(iii) hold, it is easily seen that (I1) and (I2) are satisfied. In fact, suppose that $ab\al,b\in I$. By (iii) $ab\in I$, hence by (ii) $ab\al + ab=a(b\al +b)=a\x 1\in I$. Finally, assuming that $a\al b,b\al a\in I$, one has by condition (iii) and Definition \ref{def: Lnrsmrng}(viii) that $(ac)\al (bc)= ((ac)\al c)b=((c\al a\al)\al a\al)b=(c\al a\al)\al (a\al b)\in I$.
\end{proof}

Let $\alga$ be a \L ukasiewicz near semiring. Hence, a straightforward verification proves that the structure $\langle\ida ,\wedge ,\vee ,\{0\}, A\rangle$ is a complete lattice under the set-theoretic ordering with operations $I\wedge J=I\cap J$ and $I\vee J=\langle I\cup J\rangle$ (i.e, the least ideal containing both $I$ and $J$). In what follows, we will call this structure the \textit{ideal lattice} of $\alga$. Moreover, the one-to-one correspondence between $\ida$ and $\Con$ stated by Theorems \ref{thm:cst-ideal} and \ref{thm:idl-cngrnc} is, in fact, an isomorphism.

\begin{theorem}\label{thm: isom ida con}
The ideal lattice of $\mb A$ is isomorphic to $\Con$. Hence, $\ida$ is an algebraic and distributive lattice.
\end{theorem}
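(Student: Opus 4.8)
The plan is to exhibit a pair of mutually inverse, order-preserving maps between $(\ida,\subseteq)$ and $(\Con,\subseteq)$, conclude that they constitute a lattice isomorphism, and then read off algebraicity and distributivity from the congruence side. Concretely, let $\Te\colon\ida\to\Con$ be the assignment $I\mapsto\te(I)$ furnished by Theorem~\ref{thm:idl-cngrnc}, and let $K\colon\Con\to\ida$ be the kernel map $\te\mapsto[0]_{\te}$, which is well defined by Theorem~\ref{thm:cst-ideal}. The bulk of the argument is to verify that $\Te$ and $K$ are mutually inverse and monotone in both directions.

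That $K\circ\Te=\mathrm{id}_{\ida}$, i.e.\ $[0]_{\te(I)}=I$ for every ideal $I$, is precisely the last clause of Theorem~\ref{thm:idl-cngrnc}. For $\Te\circ K=\mathrm{id}_{\Con}$ I would show $\te([0]_{\te})=\te$ for every $\te\in\Con$: by the definition \eqref{eq:1}, $a\,\te([0]_{\te})\,b$ means $a\al b,\,b\al a\in[0]_{\te}$, and by Lemma~\ref{lem: krnl1} this happens exactly when $a\,\te\,b$. Monotonicity is immediate in both directions: if $I\subseteq J$ then $\te(I)\subseteq\te(J)$ straight from \eqref{eq:1}, while if $\te_{1}\subseteq\te_{2}$ then plainly $[0]_{\te_{1}}\subseteq[0]_{\te_{2}}$. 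A bijection that is monotone together with its inverse is an order isomorphism; since $(\Con,\subseteq)$ is a (complete) lattice by general universal algebra and $(\ida,\subseteq)$ is a complete lattice with $I\wedge J=I\cap J$ and $I\vee J=\langle I\cup J\rangle$ by the verification recorded just before the statement — and since these are exactly the order-theoretic meet and join — an order isomorphism automatically transports the lattice operations. Hence $\Te$ is a (complete) lattice isomorphism $\ida\cong\Con$.

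It remains to transfer the two structural properties. Algebraicity comes for free: the congruence lattice of any algebra is algebraic, so $\ida\cong\Con$ is algebraic. For distributivity, the key observation is that $\alga$ possesses a \emph{lattice term reduct}: the join is $+$, and $x\wedge y:=(x\al+y\al)\al$ is a term which, using conditions (v) and (vi) of Definition~\ref{def: Lnrsmrng}, is readily seen to be the infimum of $x$ and $y$ with respect to $\leq$ (it is a lower bound since $x\al,y\al\leq x\al+y\al$, and it is the largest one because $z\leq x$ and $z\leq y$ force $x\al+y\al\leq z\al$, whence $z\leq(x\al+y\al)\al$). Consequently every congruence of $\alga$ is a congruence of $\langle A,+,\wedge\rangle$, and $\Con$ sits inside $\mathrm{Con}(\langle A,+,\wedge\rangle)$ as a sublattice; the latter is distributive by the Funayama--Nakayama theorem, and a sublattice of a distributive lattice is distributive, so $\Con$, and therefore $\ida$, is distributive. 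The only step that is not pure bookkeeping about mutually inverse monotone maps is noticing this lattice reduct (equivalently, that the variety is congruence distributive); I expect that — modest as it is — to be the main obstacle.
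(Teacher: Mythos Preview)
Your proof is correct and, in two places, actually cleaner than the paper's.

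First, the paper verifies by hand that $f(I\vee J)=f(I)\vee f(J)$: it shows both inclusions, invoking congruence permutability to reduce a zig-zag $a\al b\,\te(I)\,c_{1}\,\te(J)\,c_{2}\,\cdots\,0$ to a single intermediate element, then uses Lemma~\ref{lem: krnl1} and (I1),(I3) to push $a\al b$ into $\langle I\cup J\rangle$. Your order-isomorphism argument sidesteps all of this: once $\Te$ and $K$ are mutual inverses and both are monotone, the lattice structure is forced, and no appeal to permutability is needed. The paper's computation does buy one thing you do not get for free, namely an explicit description of the join that is later recycled in the description $\langle I\cup J\rangle=[I]_{\te(J)}$; but for the isomorphism statement itself your route is shorter.

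Second, for distributivity the paper simply cites \cite{BCL}, whereas you supply a self-contained reason: the term $(x\al+y\al)\al$ is the order-theoretic meet by (i), (v), (vi), so $\alga$ has a lattice term reduct and Funayama--Nakayama applies. Your ``sublattice'' step is legitimate because joins of congruences are computed as transitive closures of unions, independently of the ambient signature, so $\Con$ really is a sublattice (not merely a sub-meet-semilattice) of $\mathrm{Con}(\langle A,+,\wedge\rangle)$. This argument in fact works already at the level of $\iota$-near semirings, without the \L ukasiewicz axiom---which is consistent with the paper's citation, since \cite{BCL} establishes congruence distributivity for that broader class.
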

\begin{proof}
Let $f :\ida\rightarrow\Con$ be the mapping defined by $f  (I)=\theta (I)$. By Theorems \ref{thm:cst-ideal} and \ref{thm:idl-cngrnc}, $f$ is a bijection, and its inverse $g :\Con\rightarrow\ida$ is $g (\theta)=[0]_{\theta}$. Now, it should only be proved that $f$ is an homomorphism. Clearly, $f(I\cap J)=\theta (I)\cap\theta (J) =f(I)\wedge f(J)$. Now we show that $f(I\vee J)=f (\langle I\cup J\rangle )=\theta (\langle I\cup J\rangle)=\theta(I)\lor \theta(J)$. By Lemma \ref{lem: krnl1}, we have that $(a,b)\in\theta (I)\vee\theta (J)$  if and only if $a\al b,b\al a\in [0]_{\theta (I)\vee\theta (J)}$. Note that, by congruence permutability (cf. page \pageref{eq:cng prmtblt}), $a\al b\in [0]_{\theta (I)\vee\theta (J)}$ if and only if there is a $c$ such that %
\[
a\al b\theta (I)c\theta (J)0.
\]
Therefore, again by Lemma \ref{lem: krnl1} and Theorem \ref{thm:idl-cngrnc}: 

\[
(a\al b)\al c, c\al(a\al b)\in I\text{ and }c\in J.
\]

%\[
%(a\al b)\al c_{1}, c_{1}\al (a\al b)\in I, c_{1}\al c_{2}, c_{2}\al c_{1}\in J,...,c_{n-1}\al c_{n},c_{n}\al c_{n-1}\in I, c_{n}\in J.
%\]
%Hence, by clauses (I1)-(I2)-(I3) of Definition \ref{def:idl}, one has that $a\al b\in [0]_{\theta (I)\vee\theta (J)}$ if and only if $a\al b\in\langle I\cup J\rangle =[0]_{\theta (\langle I\cup J\rangle)}$. Similarly for $b\al a$. 
Therefore, by (I3), $(a\al b) c\al\in I$. Then, by condition (I1), we have that $a\al b\in \langle I\cup J\rangle$, and by symmetry $b\al a\in \langle I\cup J\rangle$. For the other inclusion, note that $I, J\subseteq [0]_{\theta (I)\vee\theta (J)}$. Hence $\langle{I\cup J}\rangle\subseteq [0]_{\theta (I)\vee\theta (J)}$. Therefore, by Theorem \ref{thm:idl-cngrnc}, $\theta(\langle{I\cup J}\rangle)\subseteq \theta (I)\vee\theta (J)$.
Then, it turns out that $f (I)\vee f(J)=\theta (I)\vee\theta (J)=\theta (\langle I\cup J\rangle)$. Hence $f$ is an isomorphism. Finally, since $\Con$ is both distributive (see \cite{BCL}) and, of course, algebraic, $\ida$ is a distributive and algebraic lattice.
\end{proof}
It might be useful to emphasize that the result above is, in fact, an explicit proof of a general result due to H.P. Gumm and A. Ursini. In fact, \cite[Corollary 1.9]{GU84} proves that a variety $\mathcal{V}$, equipped with a constant $0$, is \textit{ideal determined} (namely, for any $\alga\in\mathcal{V}$ there is a one to one correspondence between $\Con$ and $\ida$) if and only if $\mathcal{V}$ is 0-regular and there exists a binary term $s(x,y)$ such that 
\[
\mathcal{V}\models s(x,x)=0\ \ \mathrm{and}\ \ \mathcal{V}\models s(0,x)=x.
\]

Thus, since \L ukasiewicz near semirings are congruence regular, putting $s(x,y)=x\al y$, they are an ideal determined variety. Futhermore, it can be easily seen that the previous result provides a rather concise description of the ideals of the form $\langle I\cup J\rangle$ with $I, J\in\ida$. Let
\[
[I]_{\te (J)}=\{ a\in A | (a,i)\in\te (J)\ \mathrm{for\ some}\ i\in I \},
\]
for any $I,J\in\ida$. In any \L ukasiewicz near semiring $\alga$, we can prove:
\begin{proposition}For any $I,J\in\ida$:
\[
\langle I\cup J\rangle =[I]_{\te (J)}.
\]

\end{proposition}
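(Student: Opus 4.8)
The plan is to prove the two inclusions $\langle I\cup J\rangle\subseteq [I]_{\te(J)}$ and $[I]_{\te(J)}\subseteq\langle I\cup J\rangle$ separately, after first checking that $[I]_{\te(J)}$ is itself an ideal. For the latter, I would observe that $[I]_{\te(J)}$ is, by definition, the union of the $\te(J)$-cosets of the elements of $I$; in particular $0\in[I]_{\te(J)}$ since $0\in I$. Checking (I1) and (I2) should be a routine matter of pasting together the congruence property of $\te(J)$ with the ideal property of $I$: for instance, for (I1), if $ab\al\in[I]_{\te(J)}$ and $b\in[I]_{\te(J)}$, then there are $i,i'\in I$ with $ab\al\,\te(J)\,i$ and $b\,\te(J)\,i'$; replacing $a$ by a $\te(J)$-equivalent element and using that $I$ satisfies (I1) yields $a\in[I]_{\te(J)}$.

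For the inclusion $[I]_{\te(J)}\subseteq\langle I\cup J\rangle$, I would argue that if $a\,\te(J)\,i$ for some $i\in I$, then $a\al i,\,i\al a\in J\subseteq\langle I\cup J\rangle$, while $i\in I\subseteq\langle I\cup J\rangle$; since $\langle I\cup J\rangle$ is an ideal, (I3) gives $ai\al\in\langle I\cup J\rangle$, and then (I1) applied with $b:=i$ gives $a\in\langle I\cup J\rangle$. The reverse inclusion $\langle I\cup J\rangle\subseteq[I]_{\te(J)}$ then follows by minimality of $\langle I\cup J\rangle$: it suffices to note that $[I]_{\te(J)}$ is an ideal (the first step) containing both $I$ (take the trivial witness $i=a$) and $J$ (if $a\in J$ then $a\al 0=a\in J$ and $0\al a=1a=a\in J$, so $a\,\te(J)\,0$ with $0\in I$, whence $a\in[I]_{\te(J)}$).

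Alternatively, and perhaps more in the spirit of the preceding Theorem~\ref{thm: isom ida con}, one can bypass the direct verification: by that theorem $\langle I\cup J\rangle=[0]_{\te(I)\vee\te(J)}$, and by congruence permutability $\te(I)\vee\te(J)=\te(I)\circ\te(J)$, so $a\in[0]_{\te(I)\vee\te(J)}$ iff there is $c$ with $a\,\te(I)\,c\,\te(J)\,0$, i.e.\ $c\in[0]_{\te(I)}=I$ and $a\,\te(J)\,c$ — which is exactly the statement $a\in[I]_{\te(J)}$. This is essentially a repackaging of the computation already carried out inside the proof of Theorem~\ref{thm: isom ida con}.

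The main obstacle I anticipate is the bookkeeping in showing $[I]_{\te(J)}$ satisfies (I2): one must produce, from $a\al b,\,b\al a\in[I]_{\te(J)}$, witnesses in $I$ for $(ac)\al(bc)$ and $(ca)\al(cb)$, and the cleanest route is again to translate into congruence language — $a\al b,\,b\al a\in[0]_{\te(J)\vee\,?}$ is not quite the right framing, so instead one uses that $a\al b\,\te(J)\,i$, $b\al a\,\te(J)\,i'$ forces, via Lemma~\ref{lem: krnl1} run backwards inside a suitable congruence, control over $ac,bc$ up to $\te(J)$, and then invokes (I2) for $I$. Because the permutability argument in the third paragraph sidesteps all of this, I would present that as the proof and relegate the direct verification to a remark, if at all.
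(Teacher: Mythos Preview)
Your ``alternative'' route in the third paragraph is exactly the paper's proof: identify $\langle I\cup J\rangle$ with $[0]_{\te(I)\vee\te(J)}$ via Theorem~\ref{thm: isom ida con}, then use congruence permutability to collapse the chain to a single intermediate element. One small slip: from the chain $a\,\te(I)\,c\,\te(J)\,0$ you conclude $c\in[0]_{\te(I)}=I$, but that chain actually gives $c\in[0]_{\te(J)}=J$; you want the other composition order $a\,\te(J)\,c\,\te(I)\,0$, which permutability equally provides. (Similarly, in checking $J\subseteq[I]_{\te(J)}$ you write $a\al 0=a$ where you mean $a\al 0=0$.) Neither affects the argument.

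Your first two paragraphs sketch a genuinely different, more elementary route that the paper does not take: verify directly that $[I]_{\te(J)}$ is an ideal and invoke minimality for one inclusion, and for the other use (I3) and (I1) inside $\langle I\cup J\rangle$ at the element level, without ever passing through $[0]_{\te(I)\vee\te(J)}$. This avoids appealing to the isomorphism of Theorem~\ref{thm: isom ida con} altogether, at the cost of the (I2) bookkeeping you flag. The paper's approach is shorter precisely because it recycles that isomorphism; your direct approach would be the natural one if the proposition were proved before Theorem~\ref{thm: isom ida con} rather than after it.
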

\begin{proof} If $a\in\langle I\cup J\rangle$, then $a\in [0]_{\te(I)\vee\te(J)}$ and there exists $k<\omega$ such that $$a\theta(I)c_{1}\theta(J)c_{2}\theta(I)...c_{k}\theta(J)0.$$ Since \L ukasiewicz near semirings are congruence permutable, one has that there exists $c\in A$ such that $a\theta (J)c\theta(I)0$. Thus, $\langle I\cup J\rangle\subseteq [I]_{\te(J)}$. Conversely, if $a\in [I]_{\te(J)}$, then $a\theta(J)i\theta(I)0$ for some $i\in I$. Hence, $a\in[0]_{\te(J)\vee\te(I)}=\langle I\cup J\rangle$.
\end{proof}
As we have mentioned, $\ida$ is algebraic with $\{0\}$ and $A$ its least and the greatest element, respectively. Hence,  \textit{it has the infinite join distributive property}, see e.g. \cite{Gratzer:2011}. It means that, for any ideal $J\in\ida$, and an arbitrary family of ideals $\{ I_{\gamma}\}_{\gamma\in\Gamma}$, it holds that
\begin{equation}\label{eq: 2}
J\cap\bigvee\{I_{\gamma}|\gamma\in\Gamma\} =\bigvee\{J\cap I_{\gamma}|\gamma\in\Gamma\}.
\end{equation}
From this fact, we can deduce that:
\begin{theorem}\label{thm: ida pseudocompl}
The ideal lattice $\ida$ of any \L ukasiewicz near semiring $\alga$ is pseudocomplemented.
\end{theorem}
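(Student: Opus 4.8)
The plan is to produce, for an arbitrary ideal $J\in\ida$, an explicit candidate for its pseudocomplement and then to verify the defining property by a single application of the infinite join distributive law \eqref{eq: 2}. Recall that, by the remarks preceding the statement, $\ida$ is a \emph{complete} lattice with least element $\{0\}$ and greatest element $A$, and it satisfies \eqref{eq: 2}. Fix $J$ and consider the family
\[
\mathcal{S}_{J}=\{\,I\in\ida : I\cap J=\{0\}\,\}.
\]
This family is nonempty, since $\{0\}\cap J=\{0\}$ because $0\in J$, so by completeness we may set
\[
J^{*}=\bigvee\mathcal{S}_{J}=\bigvee\{\,I\in\ida : I\cap J=\{0\}\,\}.
\]
By construction $J^{*}$ is an upper bound, in $\ida$, of every ideal meeting $J$ trivially, so the only thing left to check is that $J^{*}$ itself lies in $\mathcal{S}_{J}$, i.e.\ that $J\cap J^{*}=\{0\}$.

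This is the step where \eqref{eq: 2} does the work. Taking the distinguished ideal in \eqref{eq: 2} to be $J$ and the indexed family $\{I_{\gamma}\}_{\gamma\in\Gamma}$ to be $\mathcal{S}_{J}$, we get
\[
J\cap J^{*}=J\cap\bigvee\mathcal{S}_{J}=\bigvee\{\,J\cap I : I\in\mathcal{S}_{J}\,\}=\bigvee\{\,\{0\}\,\}=\{0\},
\]
the third equality holding because $J\cap I=\{0\}$ for every $I\in\mathcal{S}_{J}$ by the very definition of $\mathcal{S}_{J}$. Hence $J^{*}\in\mathcal{S}_{J}$; being simultaneously a member of $\mathcal{S}_{J}$ and an upper bound of it, $J^{*}$ is the greatest ideal whose intersection with $J$ is $\{0\}$, which is exactly the assertion that $J^{*}$ is the pseudocomplement of $J$. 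As $J\in\ida$ was arbitrary, $\ida$ is pseudocomplemented.

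The argument is essentially a formal manipulation inside the complete lattice $\ida$, so I do not expect a genuine obstacle; the one point that must be handled with care is the \emph{legitimacy} of invoking \eqref{eq: 2} for the possibly large family $\mathcal{S}_{J}$, rather than only in its finite form. This is precisely what is secured by the observation, recorded just above, that $\ida$ is algebraic with top and bottom and therefore enjoys the full infinite join distributive property; so all that the write-up needs to do is cite completeness of $\ida$ (from the discussion following Theorem~\ref{thm: isom ida con}) and the identity \eqref{eq: 2}, and then run the two displayed computations above.
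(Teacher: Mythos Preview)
Your proof is correct and is essentially identical to the paper's own argument: both define the family of ideals meeting $J$ trivially, take its join as the candidate pseudocomplement, and verify $J\cap J^{*}=\{0\}$ by a single application of the infinite join distributive law \eqref{eq: 2}. The only differences are cosmetic (notation $\mathcal{S}_{J}$ versus $S^{J}$ and a bit more commentary on why the family is nonempty and why \eqref{eq: 2} is available).
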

\begin{proof}
Let $J\in\ida$ and consider the set $$S^{J}=\{I\in\ida|J\cap I=\{0\}\}.$$ Clearly, $S^{J}\neq\emptyset$, since it contains $\{ 0\}$. By equation \eqref{eq: 2}, we have that:
$$J\cap\bigvee S^{J}=J\cap\bigvee\{ I\in \ida| J\cap I=\{0\}\}=\bigvee\{J\cap I|J\cap I=\{0\}\}=\bigvee\{0\}=\{0\}.$$ In other words, $\bigvee S^{J}$ is the greatest ideal $I$ in $\ida$ such that $J\cap I=\{0\}$, which means that it is the pseudocomplement of $J$.%\marginpar{Ricorda che lo pseudocomplemento in un reticolo distributivo \`e unico.}
\end{proof}
In what follows, if $I\in\ida$, we denote the pseudocomplement of $I$ by $I^{\ast}$.\\
Let $\alga$ be a \L ukasiewicz near semiring. For any $a\in A$, we indicate by $I(a)$ the \textit{principal ideal} generated by $a$, i.e. the least ideal of $\ida$ that contains $a$.\\

Our next task will be to provide a full description of the principal ideals of $\ida$, for any \L ukasiewicz near semiring $\alga$. As it was proved in \cite{BCL}, the variety of \L ukasiewicz near semirings is congruence-permutable, as witnessed by the Mal'cev term 
\begin{equation}\label{eq:cng prmtblt}
 p(x,y,z)=((x\x y\al)\al\x z\al)+((z\x y\al)\al\x x\al))\al.
\end{equation}
By Werner's theorem \cite{Werner}, every reflexive binary relation on $A$ having the \textit{substitution property} with respect to operations of $\alga$ is a congruence on $\alga$. In particular, for any pair $(a,b)\in A^{2}$, the least reflexive relation having the substitution property, say $R(a,b)$, is the principal congruence $\theta (a,b)$, generated by $a,b$ in $\alga$. 

\noindent By Theorem \ref{thm:cst-ideal}, it follows that the ideal which is the $0$-coset of $\theta (a,0)$ is the least ideal containing $a$, namely $I(a)$. Recall that by a unary polynomial $p(x)$ we mean a unary term-function $t^{\alga}(x,e_{1},e_{2},...,e_{n})$ where $t$ is a $n+1$-ary term and $e_{1},...,e_{n}\in A$. Now, as shown in \cite{Chajda91}, one has that $(c,d)\in R(a,b)$ if and only if there exists a unary polynomial $p(x)$ on $A$ such that $c=p(a)$ and $d=p(b)$. Hence, $b\in I(a)$ if and only if $(b,0)\in\theta (a,0)$. Upon denoting by $Pol_{1}(\alga)$ the set of all unary polynomials of $\alga$, it follows directly that:
\begin{theorem}\label{thm: charact prin ideal}
For any $a\in A$, $$I(a)=\{p(a)|p\in Pol_{1}(\alga)\ \mbox{with}\ p(0)=0\}.$$
\end{theorem}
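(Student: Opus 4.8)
The plan is to assemble the ingredients recalled in the paragraph immediately preceding the statement; no new computation is really needed. The starting point is the identity $I(a)=[0]_{\theta(a,0)}$, which follows from Theorem \ref{thm:cst-ideal} together with the fact that $\theta(a,0)$ is the least congruence collapsing $a$ onto $0$ (and hence its $0$-coset is the least ideal containing $a$). Equivalently, for every $b\in A$ one has $b\in I(a)$ if and only if $(b,0)\in\theta(a,0)$.

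Next I would bring in congruence-permutability. Since the Mal'cev term $p$ in \eqref{eq:cng prmtblt} witnesses that \L ukasiewicz near semirings form a congruence-permutable variety, Werner's theorem \cite{Werner} applies: every reflexive binary relation on $A$ with the substitution property is already a congruence. In particular, the least reflexive relation with the substitution property that contains $(a,0)$, namely $R(a,0)$, is a congruence, and by minimality it coincides with the principal congruence $\theta(a,0)$. Then the description from \cite{Chajda91} gives, for all $c,d\in A$, that $(c,d)\in R(a,0)$ if and only if there is a unary polynomial $p\in Pol_{1}(\alga)$ with $c=p(a)$ and $d=p(0)$.

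Combining the two equivalences yields the theorem: $b\in I(a)$ iff $(b,0)\in\theta(a,0)=R(a,0)$ iff $b=p(a)$ for some $p\in Pol_{1}(\alga)$ with $p(0)=0$, which is precisely the asserted equality $I(a)=\{p(a)\mid p\in Pol_{1}(\alga),\ p(0)=0\}$. If one wishes to present the two inclusions separately, the inclusion $\supseteq$ is immediate and needs none of the above machinery: for $p$ with $p(0)=0$, since $\theta(a,0)$ is a congruence identifying $a$ and $0$ we get $p(a)\,\theta(a,0)\,p(0)=0$, hence $p(a)\in[0]_{\theta(a,0)}=I(a)$; the chain of equivalences is then only needed for the reverse inclusion $\subseteq$. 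The one point that deserves care — and the only place where anything could go wrong — is the applicability of the \emph{one-step} polynomial description of $R(a,0)$: this rests essentially on congruence-permutability (which guarantees, via Werner's theorem, that $R(a,0)$ is already symmetric and transitive, hence a congruence), and not on any feature special to \L ukasiewicz near semirings.
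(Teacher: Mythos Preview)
Your argument is correct and is essentially the same as the paper's: the paper derives the theorem in the paragraph immediately preceding it by identifying $I(a)$ with $[0]_{\theta(a,0)}$ via Theorem~\ref{thm:cst-ideal}, invoking Werner's theorem (through the Mal'cev term~\eqref{eq:cng prmtblt}) to equate $\theta(a,0)$ with the least reflexive compatible relation $R(a,0)$, and then applying the polynomial description of $R(a,0)$ from \cite{Chajda91}. Your write-up simply spells these steps out more explicitly and adds the (harmless) direct verification of the inclusion $\supseteq$.
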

It is easily noticed that when dealing with a \L ukasiewicz semiring $\alga$, since $+$ is idempotent and due to the associativity and commutativity of $\x$ (cf. \cite[Theorem 2]{BCL}), polynomials in one variable on $\alga$ must be necessarily of the form 
\begin{equation}\label{eqn:poly}
 p(x)=xb+c, \text{ for }b,c\in A.
\end{equation}
Now, according to the reasoning above, it is also required that $p(0)=0$. Therefore, $c$ in condition \eqref{eqn:poly} must be $0$, and then we directly infer that the description of principal ideals in a \L ukasiewicz semiring $\alga$ can be simplified as follows:
\begin{corollary} For any $a\in A$,
\[
I(a)=\{a\x c|c\in A\}.
\]

\end{corollary}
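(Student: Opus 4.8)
The plan is to obtain this Corollary as an immediate specialization of Theorem~\ref{thm: charact prin ideal}. That theorem already describes $I(a)$ as $\{p(a)\mid p\in Pol_{1}(\alga),\ p(0)=0\}$, so everything comes down to identifying which values $p(a)$ an admissible unary polynomial $p$ can take. Since $\alga$ is a \L ukasiewicz semiring, $+$ is idempotent while $\x$ is associative and commutative (cf. \cite[Theorem~2]{BCL}), and hence, as recorded in \eqref{eqn:poly}, every unary polynomial of $\alga$ has the normal form $p(x)=xb+c$ for suitable $b,c\in A$.

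First I would impose on such a $p$ the requirement $p(0)=0$. Evaluating the normal form at $0$ and using $0\x b=0$ (Definition~\ref{def: Lnrsmrng}(iv)) together with $0+c=c$ (since $0$ is the least element of $\alga$), one gets $p(0)=c$; thus $p(0)=0$ holds precisely when $c=0$. Consequently the admissible unary polynomials are exactly the maps $x\mapsto xb$ with $b\in A$, and feeding this back into Theorem~\ref{thm: charact prin ideal} yields $I(a)=\{ab\mid b\in A\}=\{a\x c\mid c\in A\}$, the last equality being just a change of dummy variable. It is harmless to record the two inclusions separately: $I(a)\subseteq\{a\x c\mid c\in A\}$ is exactly the description just obtained, whereas conversely, for each $c\in A$ the unary polynomial $q(x)=x\x c$ satisfies $q(0)=0\x c=0$, so that $a\x c=q(a)\in I(a)$.

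I do not anticipate a real obstacle here: granted Theorem~\ref{thm: charact prin ideal} and \eqref{eqn:poly}, the proof is a single substitution. The only genuine content lies in the normal form \eqref{eqn:poly} -- that is, in the fact that, in the presence of idempotency of $+$ together with associativity and commutativity of $\x$, neither the constant $1$ nor the antitone involution enlarges the family $\{xb+c\}$ of unary polynomials -- but this has been settled before the statement and may be invoked without further ado.
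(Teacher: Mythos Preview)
Your proposal is correct and mirrors the paper's own argument exactly: the paper derives the corollary in the paragraph immediately preceding it by invoking Theorem~\ref{thm: charact prin ideal}, then using the normal form \eqref{eqn:poly} and observing that $p(0)=0$ forces $c=0$. Your write-up merely makes these steps explicit (and records the trivial reverse inclusion), so there is nothing to add.
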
 
\section{Central elements and decompositions}\label{sec:2}

The aim of this section is discussing the notion of centrality in the variety of $\iota$-near semirings. 
This discussion will be relevant for the structure theory of \L ukasiewicz near semirings, since it provides a rather neat description of principal ideals generated by central elements, as well as for the application that the description of central elements has in the proof of a Cantor-Bernstein type theorem that we will propose in section \ref{sec:3}.

%Contextually a direct decomposition theorem of this variety will follow. 
This section is based on the ideas developed in \cite{Sal} and \cite{Ledda13} on the general theory of \textit{Church algebras}. 

The notion of Church algebra is based on the simple observation that many well-known algebras, including Heyting algebras, rings with unit and combinatory algebras, possess a ternary term operation $q$ and term definable nullary operations $0,\,1$, satisfying the equations: 
\[
 q(1,x,y)\approx x \text{ and } q(0,x,y)\approx y .
\]

The term operation $ q $ simulates the behaviour of the if-then-else connective and, surprisingly enough, these rather simple conditions determine quite strong algebraic properties. 

An algebra $\mathbf{A}$\ of type $\nu$\ is a \emph{Church algebra}\ if there
are term definable constants $0^{\mathbf{A}},1^{\mathbf{A}}\in A$\ and a term
operation $q^{\mathbf{A}}$ such that, for all $a,b\in A$, 
\[
q^{\mathbf{A}}\left(
1^{\mathbf{A}},a,b\right)  =a\text{ and } q^{\mathbf{A}}\left(  0^{\mathbf{A}%
},a,b\right)  =b.
\]
A variety $\mathcal{V}$\ of type $\nu$\ is a Church
variety\ if every member of $\mathcal{V}$\ is a Church algebra with respect to
the same term $q\left(  x,y,z\right)  $\ and the same constants $0,1$. 

Following the seminal work of D. Vaggione \cite{Vaggio}, we say that an element \textit{e} of a Church algebra \textbf{A} is \textit{central} if the congruences $ \theta(e,0),\theta (e,1)  $ form a pair of factor congruences on \textbf{A}. A central element is said to be \emph{nontrivial} if it differs from $0$ and $1$. We denote the set of central elements (the \textit{centre}) of $\mb{A}$ by $\mathrm {Ce}({A})$.

Setting
$$ x\wedge y=q(x,y,0),\;\; x\vee y= q(x,1,y)\;\; x^{*}=q(x,0,1) $$
we recall a general result for Church algebras:
\begin{theorem}\label{th: Boolean algebra of centrals}
\emph{\cite{Sal}} Let $ \mathbf{A} $ be a Church algebra. Then 
\[
\mathrm {Ce}(\mathbf{A})=\langle \mathrm {Ce}(A),\wedge,\vee,^{*},0,1\rangle
\]
is a Boolean algebra which is isomorphic to the Boolean algebra of factor congruences of $\mathbf{A}$.
\end{theorem}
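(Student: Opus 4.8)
The plan is to derive the statement from an \emph{equational} description of centrality, in the spirit of Vaggione \cite{Vaggio} and Salibra \cite{Sal}. The pivotal step is to prove that an element $e$ of a Church algebra $\mathbf A$, with witnessing ternary term $q$ and constants $0,1$, is central if and only if it satisfies the following conditions, all understood as identities in the variables other than $e$: (C1) $q(e,x,x)=x$; (C2) $q(e,q(e,x,y),z)=q(e,x,z)=q(e,x,q(e,y,z))$; (C3) $q(e,g(x_1,\dots,x_n),g(y_1,\dots,y_n))=g(q(e,x_1,y_1),\dots,q(e,x_n,y_n))$ for every basic operation $g$ of $\mathbf A$; (C4) $q(e,1,0)=e$. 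For the direction that carries the content, assuming (C1)--(C4) one checks that
\[
\theta_1=\{(a,b)\in A^2: q(e,a,b)=b\},\qquad \theta_2=\{(a,b)\in A^2: q(e,a,b)=a\}
\]
are congruences (this is precisely (C3)), are mutually complementary (from (C1) one gets $\theta_1\cap\theta_2=\Delta_{A}$, while for arbitrary $a,b$ the element $c:=q(e,a,b)$ satisfies $a\mathrel{\theta_1}c\mathrel{\theta_2}b$ by (C2), so $\theta_1\circ\theta_2=\nabla_{A}$), and permute; the unary polynomials $x\mapsto q(x,a,b)$, together with (C2) and (C4), then show $\theta_1=\theta(e,1)$ and $\theta_2=\theta(e,0)$, so these form a pair of factor congruences and $e$ is central. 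Conversely, if $e$ is central then $\mathbf A\cong\mathbf A/\theta(e,0)\times\mathbf A/\theta(e,1)$ with $e$ corresponding to the pair $(0,1)$, whence $q(e,-,-)$ acts coordinatewise through the defining equations $q(0,x,y)=y$ and $q(1,x,y)=x$, and (C1)--(C4) fall out as instances of these.

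Second, I would reformulate centrality via \emph{decomposition operations}: binary operations $d$ on $A$ with $d(x,x)=x$, $d(d(x,y),z)=d(x,z)=d(x,d(y,z))$, and $d$ commuting with every operation of $\mathbf A$. It is classical that the decomposition operations of $\mathbf A$ are in bijection with the complementary pairs of factor congruences and, under this correspondence, carry a Boolean algebra structure isomorphic to the Boolean algebra of factor congruences of $\mathbf A$, with $\pi_1^{A}$ and $\pi_2^{A}$ (that is, $d(x,y)=x$ and $d(x,y)=y$) as bounds (see \cite{Sal} and the references therein). Now (C3) says exactly that for a central $e$ the map $q(e,-,-)$ is a decomposition operation; conversely, for any decomposition operation $d$, using that $d$ commutes with $q$ and that $q(1,a,b)=a$, $q(0,a,b)=b$,
\[
d(a,b)=d\bigl(q(1,a,b),q(0,a,b)\bigr)=q\bigl(d(1,0),d(a,a),d(b,b)\bigr)=q\bigl(d(1,0),a,b\bigr),
\]
so $d=q(e,-,-)$ with $e:=d(1,0)$, which is central by the first step. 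Hence $e\mapsto q^{\mathbf A}(e,-,-)$ is a bijection from $\mathrm{Ce}(A)$ onto the decomposition operations of $\mathbf A$ (injectivity is immediate, as $q(e,1,0)=e$ recovers $e$), and in particular $\mathrm{Ce}(A)$ inherits a Boolean algebra structure isomorphic to the Boolean algebra of factor congruences.

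Finally, it remains to identify the inherited operations with those in the statement, i.e.\ to verify that under $e\mapsto q(e,-,-)$ the operations $e\wedge f=q(e,f,0)$, $e\vee f=q(e,1,f)$, $e^{*}=q(e,0,1)$ become meet, join and complement of decomposition operations, and that $0,1$ become $\pi_2^{A},\pi_1^{A}$. The cleanest route is again to pass to the product decomposition induced by $e$ (or by $e$ and $f$ jointly, using that the centre of a product is the product of the centres): there $q(e,f,0)$, $q(e,1,f)$, $q(e,0,1)$ reduce to the componentwise meet, join and complement of the pairs representing $e$ and $f$, which is precisely the Boolean structure on complementary factor pairs. I expect the main obstacle to be organizational rather than deep: setting up the decomposition-operation formalism and carefully matching the two presentations of the Boolean operations; the real substance is concentrated in the equational characterization of the first step. (In the application to \L ukasiewicz near semirings the last part can be shortcut, since $\mathrm{Con}(\mathbf A)$ is distributive by Theorem~\ref{thm: isom ida con} and congruences permute, so the factor congruences form a Boolean sublattice of $\mathrm{Con}(\mathbf A)$.)
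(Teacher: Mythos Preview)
The paper does not prove this theorem at all: it is stated as a quotation from \cite{Sal} and used as a black box. So there is no ``paper's own proof'' to compare against; your proposal is a reconstruction of the argument behind the cited result, and as such it is essentially the approach of \cite{Sal} (and, further back, of Vaggione \cite{Vaggio}): characterize centrality equationally via (C1)--(C4), observe that $q(e,-,-)$ is then a decomposition operation, and transport the Boolean structure on decomposition operations (equivalently, on complementary pairs of factor congruences) back to $\mathrm{Ce}(A)$.

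Your sketch is correct. A couple of minor points worth tightening if you ever write this out in full. First, when you claim $\theta_1,\theta_2$ are congruences ``this is precisely (C3)'', note that (C3) gives compatibility with the basic operations but reflexivity, symmetry and transitivity come from (C1) and the two halves of (C2) respectively; you use these later but the parenthetical attribution is slightly misleading. Second, in the final identification of $\wedge,\vee,{}^{*}$ you appeal to ``the centre of a product is the product of the centres''; this is true, but strictly speaking it is a consequence of the very theorem you are proving (or at least of the equational characterization in your first step applied coordinatewise), so it is cleaner to argue directly: for central $e,f$ one checks, using (C1)--(C4) for both $e$ and $f$, that $q(e,f,0)$, $q(e,1,f)$ and $q(e,0,1)$ again satisfy (C1)--(C4) and that the associated factor-congruence pairs are the meet, join and complement of those for $e$ and $f$ in the lattice of factor congruences. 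None of this affects the soundness of your outline.
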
 

%In what follows, by $ \wedge $, $ \vee $, we mean the operations defined above on Church algebras instead of the usual meet and join operations, respectively. 

If $\mathbf{A}$ is a Church algebra of type $\nu$ and $e\in A$ is a central
element, then we define $\mathbf{A}_{e}=(A_{e},g_{e})_{g\in\nu}$ to be the
$\nu$-algebra defined as follows:

\begin{equation}\label{eq:opAe}
A_{e}=\{e\wedge b:b\in A\};\quad g_{e}(e\wedge\overline{b})=e\wedge
g(e\wedge\overline{b}),
\end{equation}
where $ \overline{b} $ denotes the $n$-tuple $ b_1,...,b_n $ and $e\wedge\overline{b} $ is an abbreviation for $ e\wedge b_1,...,e\wedge b_n $.

In any Church algebra, central elements are amenable of a neat description as follows:
\begin{theorem}\label{prop: description of central elements in Church varieties}
If $\mb{A}$ is a Church algebra of type $ \nu $ and $ e\in A $, the following conditions are equivalent: 
\begin{itemize}
\item[(1)] e is central;
\item[(2)] for all $ a,b,\in A$ and for all $\overline{a}, \overline{b}\in A^{n}$:
\begin{itemize}
\item[\textbf{a)}] $ q(e,a,a)=a $,
\item[\textbf{b)}] $ q(e,q(e,a,b),c)=q(e,a,c)=q(e,a,q(e,b,c)) $,
\item[\textbf{c)}] $ q(e,f( \overline{a}),f( \overline{b}))= f(q(e,a_{1},b_{1}),...,q(e,a_{n},b_{n})) $, for every $ f\in\nu $, 
\item[\textbf{d)}] $ q(e,1,0)=e $.
\end{itemize}
\end{itemize}
\end{theorem}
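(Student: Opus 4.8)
The plan is to prove the equivalence by showing $(1)\Rightarrow(2)$ via the standard machinery of factor congruences, and $(2)\Rightarrow(1)$ by using conditions \textbf{a)--d)} to build the required decomposition directly. For the forward direction, assume $e$ is central, so $\theta(e,0)$ and $\theta(e,1)$ are complementary factor congruences: $\theta(e,0)\cap\theta(e,1)=\Delta$ and $\theta(e,0)\circ\theta(e,1)=\nabla$. The key observation is that $q(e,a,b)$ is the unique element congruent to $a$ modulo $\theta(e,1)$ and congruent to $b$ modulo $\theta(e,0)$; indeed $e\mathbin{\theta(e,1)}1$ gives $q(e,a,b)\mathbin{\theta(e,1)}q(1,a,b)=a$, and $e\mathbin{\theta(e,0)}0$ gives $q(e,a,b)\mathbin{\theta(e,0)}q(0,a,b)=b$, and uniqueness follows because the two congruences intersect to the diagonal. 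Once this "bi-coordinate" description of $q(e,-,-)$ is in hand, each of \textbf{a)--d)} falls out by checking that both sides have the same pair of coordinates: for \textbf{a)}, $a$ itself has coordinates $(a,a)$; for \textbf{b)}, $q(e,q(e,a,b),c)$ has first coordinate $q(1,q(1,a,b),c)=a$ and second coordinate $q(0,q(0,a,b),c)=c$, matching $q(e,a,c)$, and similarly on the other side; for \textbf{c)}, apply the fact that $\theta(e,1)$ and $\theta(e,0)$ are congruences, so $f$ commutes with taking each coordinate; for \textbf{d)}, $q(e,1,0)$ has coordinates $(1,0)$, which are exactly the coordinates of $e$ since $e\mathbin{\theta(e,1)}1$ and $e\mathbin{\theta(e,0)}0$.

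For the converse, assume \textbf{a)--d)} and aim to show $\theta(e,0)$ and $\theta(e,1)$ form a pair of factor congruences. Define a candidate isomorphism $\varphi:\mathbf{A}\to\mathbf{A}_e\times\mathbf{A}_{e^{*}}$ (or more directly, work with the relations $R_1,R_2$ defined by $a\mathbin{R_1}b$ iff $q(e,a,c)=q(e,b,c)$ for all $c$, and symmetrically), using $q(e,-,-)$ as the pairing function. The natural approach following \cite{Sal} is to show that the map $a\mapsto(e\wedge a, e^{*}\wedge a)=(q(e,a,0),q(e,0,a))$ is a bijection with inverse $(x,y)\mapsto q(e,x,y)$, then check it is a homomorphism onto the product of the algebras $\mathbf{A}_e$ and $\mathbf{A}_{e^{*}}$ defined in \eqref{eq:opAe}. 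That the composite in one direction is the identity uses \textbf{b)} (which gives $q(e,q(e,a,0),q(e,0,a))=q(e,a,q(e,0,a))=q(e,a,a)=a$, the last step by \textbf{a)}); that it is the identity in the other direction on $A_e\times A_{e^{*}}$ uses \textbf{b)} again together with idempotency-type consequences of \textbf{a)}. The homomorphism property of the projection onto $\mathbf{A}_e$ is precisely condition \textbf{c)} restricted appropriately, combined with the definition $g_e(e\wedge\bar b)=e\wedge g(e\wedge\bar b)$. Finally, one must verify that the two kernels of the projections are exactly $\theta(e,1)$ and $\theta(e,0)$: the inclusion $\theta(e,1)\subseteq\ker(\pi_2)$ and $\theta(e,0)\subseteq\ker(\pi_1)$ is automatic because $e$ maps to $(e\wedge e, 0)$ and $1$ maps to $(e,\,0\text{-ish})$—here condition \textbf{d)} is what guarantees $e\wedge e = q(e,1,0)\wedge\text{-compatibility}$, ensuring $e$ and $1$ are identified by $\pi_2$ and $e$ and $0$ by $\pi_1$—and the reverse inclusions follow since any congruence contained in a factor congruence and generated by the right pair must coincide with it.

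I expect the main obstacle to be the bookkeeping in the converse direction: establishing that the abstractly defined algebras $\mathbf{A}_e$, $\mathbf{A}_{e^{*}}$ are well defined (i.e. that $g_e$ does not depend on the choice of representatives $\bar b$ with $e\wedge\bar b$ fixed) and that the pairing map is genuinely a homomorphism require careful repeated application of \textbf{b)} and \textbf{c)}, and the identification of the kernels with the principal congruences $\theta(e,0),\theta(e,1)$ needs the observation that $\theta(e,0)$ is the smallest congruence collapsing $e$ and $0$ while $\ker\pi_1$ does collapse them and is contained in a factor congruence. A cleaner route, which I would actually pursue, is to quote Theorem \ref{prop: description of central elements in Church varieties}'s provenance: this is essentially \cite[Theorem 4]{Sal} (or the corresponding statement in \cite{Ledda13}), proved there for arbitrary Church algebras, so the proof here can be compressed to "this is \cite{Sal}; we sketch the argument" and then present the two-coordinate description of $q(e,-,-)$ as the conceptual heart, leaving the routine verifications of \textbf{a)--d)} and their converse to the reader or to the cited source.
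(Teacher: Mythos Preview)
The paper gives no proof of this theorem at all: it is stated as a background result from the general theory of Church algebras, imported from \cite{Sal} (and \cite{Ledda13}), and the narrative moves on immediately. Your final paragraph already anticipates this, and that is exactly the route the paper takes.

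Your sketch of the argument is nonetheless correct and is essentially how the result is proved in \cite{Sal}: the forward direction rests on the observation that $q(e,a,b)$ is the unique element with coordinates $(a,b)$ relative to the factor pair $(\theta(e,1),\theta(e,0))$, from which \textbf{a)}--\textbf{d)} are read off componentwise; the converse uses \textbf{a)}--\textbf{c)} to verify that $a\mapsto (q(e,a,0),q(e,0,a))$ is an isomorphism onto $\mathbf{A}_e\times\mathbf{A}_{e^*}$ with inverse $(x,y)\mapsto q(e,x,y)$, and \textbf{d)} to identify the kernels of the projections with $\theta(e,1)$ and $\theta(e,0)$. The one place where your write-up gets vague is the kernel identification (``$0$-ish'', ``$\wedge$-compatibility''); the clean statement is that \textbf{d)} gives $q(e,e,0)=q(e,q(e,1,0),0)=q(e,1,0)=e$ and $q(e,0,e)=q(e,0,q(e,1,0))=q(e,0,0)=0$, so $e$ and $1$ have the same first coordinate while $e$ and $0$ have the same second coordinate, whence $\theta(e,1)\subseteq\ker\pi_1$ and $\theta(e,0)\subseteq\ker\pi_2$; equality then follows because the two kernels already form a factor pair.
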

By \cite[Theorem 4]{Ledda13}, we obtain the following theorem:

\begin{theorem}\label{th: decomposizione Church algebras}
\label{relat}Let $\mathbf{A}$ be a Church algebra of type $\nu$ and $e$ be a
central element. Then:

\begin{enumerate}
\item For every $n$-ary $g\in\nu$ and every sequence of elements $\overline
{b}\in A^{n}$, $e\wedge g(\overline{b})=e\wedge g(e\wedge\overline{b})$, so
that the function $h_{e}:A\rightarrow A_{e}$, defined by $h_{e}(b)=e\wedge b $, is a
homomorphism from $\mathbf{A}$ onto $\mathbf{A}_{e}$.

\item $\mathbf{A}_{e}$ is isomorphic to $\mathbf{A}/\theta(e,1)$. It follows
that $\mathbf{A}\cong\mathbf{A}_{e}\times\mathbf{A}_{e^{*}}$ for every
central element $e$, as in the Boolean case, under the mapping $f(a)\mapsto (h_{e} (a),h_{e^{\ast}} (a))$.
\end{enumerate}
\end{theorem}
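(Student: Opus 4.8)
The plan is to derive everything from the explicit description of central elements in Theorem \ref{prop: description of central elements in Church varieties} (the identities \textbf{a)}--\textbf{d)} for $q(e,-,-)$), the defining Church identities $q(1,x,y)=x$ and $q(0,x,y)=y$, and the fact from Theorem \ref{th: Boolean algebra of centrals} that $e^{\ast}$ is again central and that $\theta(e,0),\theta(e,1)$ form a complementary pair of factor congruences, i.e.\ $\theta(e,0)\cap\theta(e,1)=\Delta_{A}$ and $\theta(e,0)\circ\theta(e,1)=\nabla_{A}$.

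For item (1), fix an $n$-ary $g\in\nu$ and $\overline{b}\in A^{n}$. Recalling $x\wedge y=q(x,y,0)$, condition \textbf{c)} applied to $f=g$ with the tuples $\overline{b}$ and $\overline{0}=(0,\dots,0)$ gives $q(e,g(\overline{b}),g(\overline{0}))=g(q(e,b_{1},0),\dots,q(e,b_{n},0))=g(e\wedge\overline{b})$. Therefore
\[
e\wedge g(e\wedge\overline{b})=q\bigl(e,q(e,g(\overline{b}),g(\overline{0})),0\bigr)=q(e,g(\overline{b}),0)=e\wedge g(\overline{b}),
\]
the middle equality being condition \textbf{b)} with $a=g(\overline{b})$, $b=g(\overline{0})$, $c=0$. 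By the defining equation \eqref{eq:opAe} for $g_{e}$ this is exactly $h_{e}(g(\overline{b}))=g_{e}(h_{e}(b_{1}),\dots,h_{e}(b_{n}))$, and since $A_{e}=\{e\wedge b:b\in A\}$ the map $h_{e}$ is onto; hence $h_{e}\colon\mathbf{A}\to\mathbf{A}_{e}$ is a surjective homomorphism. Running the same argument with the central element $e^{\ast}$ yields a surjective homomorphism $h_{e^{\ast}}\colon\mathbf{A}\to\mathbf{A}_{e^{\ast}}$.

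For item (2), by the homomorphism theorem it suffices to prove $\ker h_{e}=\theta(e,1)$, whence $\mathbf{A}_{e}\cong\mathbf{A}/\ker h_{e}=\mathbf{A}/\theta(e,1)$. The inclusion $\theta(e,1)\subseteq\ker h_{e}$ amounts to $(e,1)\in\ker h_{e}$, which holds because $h_{e}(1)=q(e,1,0)=e$ by \textbf{d)} and $h_{e}(e)=q(e,e,0)=q(e,q(e,1,0),0)=q(e,1,0)=e$ by \textbf{d)} and \textbf{b)}. Symmetrically $(e,0)\in\ker h_{e^{\ast}}$: here $h_{e^{\ast}}(0)=q(e^{\ast},0,0)=0$ by \textbf{a)}, and $h_{e^{\ast}}(e)=e^{\ast}\wedge e=0$, this last being the meet of complementary elements in the Boolean algebra $\mathrm{Ce}(\mathbf{A})$ of Theorem \ref{th: Boolean algebra of centrals}. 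Hence $\theta(e,0)\subseteq\ker h_{e^{\ast}}$. Now form the homomorphism $\phi\colon A\to A_{e}\times A_{e^{\ast}}$, $\phi(a)=(h_{e}(a),h_{e^{\ast}}(a))$. It is injective, since $\ker\phi=\ker h_{e}\cap\ker h_{e^{\ast}}\supseteq\theta(e,1)\cap\theta(e,0)=\Delta_{A}$; and it is surjective, since for $b,c\in A$ the element $a=q(e,b,c)$ satisfies $h_{e}(a)=h_{e}(b)$ and $h_{e^{\ast}}(a)=h_{e^{\ast}}(c)$ — because $h_{e},h_{e^{\ast}}$ preserve $q$, $(e,1)\in\ker h_{e}$, $(e,0)\in\ker h_{e^{\ast}}$, and $q(1,x,y)=x$, $q(0,x,y)=y$. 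Thus $\phi$ realises the desired isomorphism $\mathbf{A}\cong\mathbf{A}_{e}\times\mathbf{A}_{e^{\ast}}$.

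It remains to upgrade $\theta(e,1)\subseteq\ker h_{e}$ to an equality. Given $(a,b)\in\ker h_{e}$, the relation $\theta(e,0)\circ\theta(e,1)=\nabla_{A}$ provides $d$ with $a\mathrel{\theta(e,0)}d$ and $d\mathrel{\theta(e,1)}b$; then $(a,d)\in\theta(e,0)\subseteq\ker h_{e^{\ast}}$ while $(d,b)\in\theta(e,1)\subseteq\ker h_{e}$, so $(a,d)\in\ker h_{e}$ as well, and therefore $(a,d)\in\ker h_{e}\cap\ker h_{e^{\ast}}=\ker\phi=\Delta_{A}$, i.e.\ $a=d$ and $(a,b)=(d,b)\in\theta(e,1)$. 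The crux of the argument — and the step I expect to be the real obstacle — is precisely this identification $\ker h_{e}=\theta(e,1)$: the inclusion $\supseteq$ is an immediate consequence of \textbf{a)}--\textbf{d)}, but the reverse inclusion genuinely exploits centrality through Theorem \ref{th: Boolean algebra of centrals}, namely that $\theta(e,0)$ and $\theta(e,1)$ permute and meet trivially, together with the auxiliary central element $e^{\ast}$; everything else reduces to routine rewriting with the identities for $q(e,-,-)$.
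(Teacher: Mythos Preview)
The paper does not prove this theorem itself; it is imported from \cite[Theorem~4]{Ledda13}, so there is no in-paper argument to compare against. Your derivation from Theorems~\ref{prop: description of central elements in Church varieties} and~\ref{th: Boolean algebra of centrals} is almost complete, but it has one real gap, and that gap makes the final paragraph circular.

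The problem is the injectivity of $\phi$. You write ``$\ker\phi=\ker h_{e}\cap\ker h_{e^{\ast}}\supseteq\theta(e,1)\cap\theta(e,0)=\Delta_{A}$'' and conclude that $\phi$ is injective; but the containment you display is the trivial one, since at this point you have only established $\theta(e,1)\subseteq\ker h_{e}$ and $\theta(e,0)\subseteq\ker h_{e^{\ast}}$. You then \emph{use} $\ker\phi=\Delta_{A}$ in the last paragraph to force $\ker h_{e}=\theta(e,1)$, so the argument as written assumes what it has yet to prove.

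The repair is short and stays within \textbf{a)}--\textbf{d)}. Applying \textbf{c)} (extended inductively to the term $q$) together with \textbf{a)} gives
\[
q(e,y,x)=q\bigl(e,q(0,x,y),q(1,x,y)\bigr)=q\bigl(q(e,0,1),q(e,x,x),q(e,y,y)\bigr)=q(e^{\ast},x,y),
\]
so that $h_{e^{\ast}}(a)=q(e^{\ast},a,0)=q(e,0,a)$. Now assume $q(e,a,0)=q(e,b,0)$ and $q(e,0,a)=q(e,0,b)$. Two applications of \textbf{b)} and one of \textbf{a)} give
\[
a=q(e,a,a)=q\bigl(e,q(e,a,0),q(e,0,a)\bigr)=q\bigl(e,q(e,b,0),q(e,0,b)\bigr)=q(e,b,b)=b,
\]
which yields $\ker\phi=\Delta_{A}$ directly. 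With this in hand, your surjectivity argument and your last paragraph (using $\theta(e,0)\circ\theta(e,1)=\nabla_{A}$ to obtain $\ker h_{e}=\theta(e,1)$) go through as written.
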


This facts will be expedient in the context of $\iota$-near semirings. Indeed, they are a Church variety \cite[Definition 3.1]{Sal}.
\begin{lemma}\label{prop:Church variety}
The class of $\iota$-near semirings is a Church variety, with witness term 
\[
q(x,y,z) = (x\cdot y) + (x\al\cdot z).
\]
\end{lemma}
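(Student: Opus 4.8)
The plan is to verify directly that the term $q(x,y,z) = (x\cdot y) + (x\al\cdot z)$ witnesses the Church-algebra structure, i.e. that the two defining identities $q(1,y,z)\approx y$ and $q(0,y,z)\approx z$ hold in every $\iota$-near semiring, using only the axioms (i)--(vi) of Definition \ref{def: Lnrsmrng} together with the already-noted fact that $0\al=1$. Since $0$ and $1$ are primitive constants of the signature, nothing needs to be said about term-definability of the constants; the whole content is the two equations.

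First I would compute $q(1,y,z)$. By the definition, $q(1,y,z) = (1\cdot y) + (1\al\cdot z)$. Axiom (ii) gives $1\cdot y \approx y$, and since $1\al = 0\al\al = 0$ (using (v) and the remark that $0\al=1$), axiom (iv) gives $1\al\cdot z = 0\cdot z \approx 0$. Hence $q(1,y,z) \approx y + 0 \approx y$, the last step because $0$ is the least element of the join semilattice by (i). Next I would compute $q(0,y,z) = (0\cdot y) + (0\al\cdot z)$. Axiom (iv) gives $0\cdot y \approx 0$, and $0\al = 1$, so $0\al\cdot z = 1\cdot z \approx z$ by (ii). Therefore $q(0,y,z) \approx 0 + z \approx z$. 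This establishes that every $\iota$-near semiring is a Church algebra with respect to this single term $q$ and the constants $0,1$, and since the term and constants are uniform across the variety, the class of $\iota$-near semirings is a Church variety.

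There is essentially no obstacle here: the argument is a short unwinding of the axioms, and the only mildly delicate point is remembering to justify $1\al = 0$ from the involution axiom (v) plus the observation $0\al = 1$ recorded after Definition \ref{def: Lnrsmrng}. I would state that identity explicitly at the start of the proof so that both computations go through cleanly. (It is worth noting in passing that one does not even need the \L ukasiewicz axiom (vii) or the idempotency/integrality of $+$ for this lemma --- only the near-semiring axioms with antitone involution are used --- which is consistent with the lemma being stated for the full variety of $\iota$-near semirings.)
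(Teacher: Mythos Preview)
Your proposal is correct and is precisely the ``direct computation'' the paper alludes to: you verify $q(1,y,z)\approx y$ and $q(0,y,z)\approx z$ using axioms (i), (ii), (iv), (v) and the fact $0\al=1$, which is exactly what is needed. There is nothing to add or correct.
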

\begin{proof}
Direct computation.% (cf. \cite[Proposition 3]{BCL}).% yields: $q(0,y,z)=(0\cdot z) + (0\al\cdot y)=0 +(1\cdot y)=y$, and $q(1,y,z)=(1\cdot z) + (1\al\cdot y)=z+ (0\cdot y)=z$.
\end{proof}
A straightforward interpretation of items a)--d) of Theorem \ref{prop: description of central elements in Church varieties} in our framework immediately provides that, given a $\iota$-near semiring $\alga$, the operations $ \wedge,\vee,^{*} $ in the Boolean algebra $\mathrm{Ce}(\alga)$ coincide with $ \cdot, + , \al$, respectively (cf. \cite[Proposition 3]{BCL}). 
\begin{lemma}\label{lem: cntrl-nvltv}
If $e$ is central in a $\iota$-near semiring $\alga$, and $a,b\in A$, then,
\begin{enumerate}
\item $e\cdot e=e$ (idempotency);
\item $e\cdot a=a \cdot e$ (commutativity);
\item $(e\cdot a)\cdot b= a\cdot (e\cdot b)$ (associativity).
\end{enumerate}
\end{lemma}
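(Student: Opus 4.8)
The plan is to derive all three identities directly from the characterization of central elements furnished by Theorem~\ref{prop: description of central elements in Church varieties}, instantiated in the $\iota$-near semiring setting via the witness term $q(x,y,z)=(x\cdot y)+(x\al\cdot z)$ from Lemma~\ref{prop:Church variety}. The key preliminary observation is that, since $e$ is central, item~\textbf{a)} of Theorem~\ref{prop: description of central elements in Church varieties} gives $q(e,a,a)=a$ for all $a$, and item~\textbf{d)} gives $q(e,1,0)=e$; unpacking $q$ these read $(e\cdot a)+(e\al\cdot a)=a$ and $e\cdot 1 + e\al\cdot 0 = e$, the latter being automatic. I would also record, for later use, the special evaluations $q(e,1,a)=e+(e\al a)$ and $q(e,a,0)=ea$, and the fact (from \cite[Proposition 3]{BCL}, cited just before the lemma) that on central elements $\wedge=\cdot$, $\vee=+$, $^{*}=\al$, so in particular $e\al = e^{*}$ behaves as a Boolean complement of $e$.

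First I would prove idempotency. Applying item~\textbf{c)} (the operation-commutation clause) with the binary operation $f=\cdot$ and the argument tuples $(1,0)$ and $(1,0)$ — more efficiently, apply \textbf{c)} with $f$ the constant-free reading, or simply use \textbf{a)} with $a=e$: $q(e,e,e)=e$ unpacks to $e\cdot e + e\al\cdot e = e$. Since by Proposition~\ref{prp: lmm3}(a) we have $e\al e = 0$, this collapses to $e\cdot e = e$, giving~(1). Alternatively and more in the spirit of the Church-algebra machinery, $e\cdot e = q(e,e,0)\wedge$-style evaluation; either route is a one-line computation.

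Next, commutativity~(2): I would use item~\textbf{c)} with $f=\cdot$, feeding in the tuples $\overline{a}=(a,1)$ and $\overline{b}=(1,a)$, so that $q(e,\,a\cdot 1,\,1\cdot a) = (q(e,a,1))\cdot(q(e,1,a))$. The left side is $q(e,a,a)=a$. The right side is $\big(e a + e\al\big)\cdot\big(e + e\al a\big)$; expanding via right-distributivity (iii) and the identities $e\al e=0=e e\al$, $ee=e$, $e\al e\al = e\al$ (idempotency of $e\al$, which follows from~(1) applied to the central element $e^{*}$), this should reduce to $ea + e\al a$ — wait, more carefully I would instead pick the tuples that isolate $ea$ versus $ae$ directly: apply \textbf{c)} with $f=\cdot$, $\overline{a}=(e,a)$, $\overline{b}=(a,e)$ to get $q(e, ea, ae) = q(e,e,a)\cdot q(e,a,e)$, and compute both sides; the left-hand side, using $q(e,e,a)=ee+e\al a = e + e\al a$ and $q(e,a,e) = ea + e\al e = ea$, needs to be massaged so that centrality (clause \textbf{a)} and \textbf{b)}) forces $ea=ae$. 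The cleanest path is: $ea = q(e,a,0)$ and $ae = q(e,a,0)$ will follow once we show $q(e,a,0)$ and the ``right multiplication'' version agree, which is exactly what clause~\textbf{c)} with $f=\cdot$ delivers by comparing $q(e,a\cdot b, 0\cdot b)$ against $q(e,a,0)\cdot q(e,b,b)=q(e,a,0)\cdot b$ at $b=e$.

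Finally, the mixed associativity~(3), $(ea)b = a(eb)$: here I would again invoke clause~\textbf{c)} for $f=\cdot$, now with the tuples $\overline{a}=(ea, a)$ and $\overline{b}=(0\cdot \text{-thing})$ chosen so that one side reads $q(e,(ea)b, 0)=e\cdot((ea)b)=(ea)b$ (using idempotency and $e\al(\cdots)$ vanishing appropriately) and the other factors as $q(e,ea,0)\cdot q(e,b,\star) = ea \cdot b$ on one grouping and $a\cdot(eb)$ on another, the equality of the two groupings being precisely the content of clause~\textbf{b)} (the ``associativity of $q$ in its second argument'') transported through~\textbf{c)}. I expect the main obstacle to be bookkeeping: choosing, in each of~(2) and~(3), the right argument tuples for clause~\textbf{c)} so that after expanding $q$ and cancelling the terms killed by $e\al e = e e\al = 0$ (Proposition~\ref{prp: lmm3}(a)) and simplified by $ee=e$, $e\al e\al = e\al$, the desired identity drops out without circularity. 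None of the individual computations is deep — each is a short manipulation with distributivity~(iii), the unit laws~(ii), and the two cancellation identities — but picking the tuples cleanly is where care is needed; once~(1) is in hand,~(2) and~(3) follow by the same template.
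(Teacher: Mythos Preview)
Your overall idea---use clauses \textbf{a)}, \textbf{c)}, \textbf{d)} of Theorem~\ref{prop: description of central elements in Church varieties} together with the witness term $q(x,y,z)=(xy)+(x\al z)$---is exactly what the paper does. But there is a genuine gap in how you execute it, and it matters.

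First, a scope error: you invoke Proposition~\ref{prp: lmm3}(a) to obtain $e\al e=0$, but that proposition is stated only for \L ukasiewicz near semirings, whereas the present lemma is about arbitrary $\iota$-near semirings. (The identity $e\al e=0$ \emph{does} hold for central $e$, but it must be derived from the Church machinery, e.g.\ $e\al e=q(e,0,1)\cdot q(e,1,0)=q(e,0\cdot 1,1\cdot 0)=q(e,0,0)=0$; your citation is not a valid justification here.)

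Second, and more seriously, your plan for (2) and (3) is to \emph{unpack} $q$ into the near-semiring operations and then simplify by cancellation. This runs into the wall that in a $\iota$-near semiring only the one-sided distributive law $(x+y)z=xz+yz$ is available: an expression like $(ea+e\al)(e+e\al a)$ cannot be fully expanded, and none of the tuple choices you sketch for clause~\textbf{c)} actually closes the argument (your final attempt yields only $(ea)e=e(ae)$, not $ea=ae$). The paper avoids this entirely by \emph{never} unpacking $q$ beyond the identifications $q(e,1,0)=e$, $q(e,a,a)=a$, $q(e,a,0)=ea$. The trick you are missing is to insert the unit and zero laws $1\cdot a=a=a\cdot 1$, $0\cdot a=0=a\cdot 0$ inside $q$ and then apply \textbf{c)} to factor. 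Concretely:
\[
e\cdot e \;=\; q(e,1,0)\cdot q(e,1,0)\;\stackrel{\textbf{c)}}{=}\;q(e,1\cdot 1,0\cdot 0)\;=\;q(e,1,0)\;=\;e,
\]
\[
e\cdot a \;=\; q(e,1,0)\cdot q(e,a,a)\;\stackrel{\textbf{c)}}{=}\;q(e,1a,0a)\;=\;q(e,a1,a0)\;\stackrel{\textbf{c)}}{=}\;q(e,a,a)\cdot q(e,1,0)\;=\;a\cdot e,
\]
\[
(ea)b \;=\; q(e,a,0)\cdot q(e,b,b)\;\stackrel{\textbf{c)}}{=}\;q(e,ab,0)\;=\;q(e,a,a)\cdot q(e,b,0)\;=\;a(eb).
\]
No expansion, no cancellation, no distributivity issues. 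Your ``bookkeeping'' worry is real, but the right bookkeeping is to keep everything wrapped inside $q$ and let clause~\textbf{c)} do the work.
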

\begin{proof} In this proof we will freely use Theorem \ref{prop: description of central elements in Church varieties} and Lemma \ref{prop:Church variety}.\\
(1) $e=q(e,1,0)= q(e,1\x1,0\x0)=q(e,1,0)\x q(e,1,0)=e\x e$.\\
(2) $e\x a=q(e,1,0)\x q(e,a,a)= q(e,1\x a,0\x a)=q(e,a\x 1,a\x 0)=q(e,a, a)\x q(e,1,0)=a\x e$.\\
(3) $(e\x a)\x b=q(e, a,0)\x q(e,b,b)=q(e,a\x b,0)=e\x(a\x b)=q(e,a,a)\x q(e,b,0)=a\x(e\x b)$.
\end{proof}

In \L ukasiewicz near semirings conditions (a)-(d) in Theorem \ref{prop: description of central elements in Church varieties} translate as follows:
a) is trivially satisfied, by Lemma \ref{lem: cntrl-nvltv}.
%\[
%(e \x a)+(e\al \x a)=a;
%\]
For condition b),
\[
(e\x c)+(e\al\x((e\x b)+(e\al \x a)))=(e\x c)+(e\al\x a);
\]
and 
\[
(e\x c)+(e\al\x a)=(e\x((e\x c)+(e\al\x b)))+(e\al\x a).
\]
As regards condition c), if $f$ is the constant $0$ or $1$, then clearly $(e\x 1)+(e\al\x 1)=1$, and $(e\x 0)+(e\al\x0)=0$.
If $f$ is $+$, 
\[
(e\x(b_{1}+b_{2}))\x(e\al\x(a_{1}+a_{2}))=((e\x b_{1})+(e\al\x a_{1}))+((e\x b_{2})+(e\al\x a_{2})).
\]
In case $f$ is $\x$, 
\[
(e\x(b_{1}\x b_{2}))+(e\al\x(a_{1}\x a_{2}))=((e\x b_{1})+(e\al\x a_{1}))\x((e\x b_{2})+(e\al\x a_{2})).
\]
In case $f$ is $\al$, 
\[
(e\x a\al)+(e\al\x b\al)=((e\x a)+(e\al\x b))\al.
\]

Finally, condition d) is obviously satisfied by any element in a \L ukasiewicz near semirings.

As we have already seen in section \ref{sec:1}, Theorem \ref{thm: charact prin ideal} provides a full description of principal ideals generated by elements of a \L ukasiewicz near semiring. Moreover, generalizing the Boolean case, central elements produce a direct decomposition of these algebras. Due to this fact,  in what follows we will see that the ideals generated by central elements can be described easily.
\begin{definition}
Let $\Lns$ be a \L ukasiewicz near semiring and $I,J\in\ida$. $I,J$ form a pair of \emph{factor ideals} if and only if 
$$I\cap J=\{0\}\quad\mbox{and}\quad I\vee J=A.$$ 
\end{definition}
By the fact that $\ida$ and $\Con$ are the universes of two isomorphic algebraic distributive lattices, it is direct to verify that $I,J$ form a pair of factor ideals if and only if $\theta (I)$ and $\theta (J)$ form a pair of factor congruences.\\
Upon recalling that, for $a\in A$, the interval $[0,a]$ corresponds to the set $\{x|x\leq a\}$, from the last notion introduced, the following theorem is obtained.
\begin{theorem}\label{thm: char idl prin centr}
Let $e$ be a central element of a \L ukasiewicz near semiring $\Lns$. Then $I(e)=[0,e]$.
\end{theorem}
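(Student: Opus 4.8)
The plan is to establish the two inclusions $I(e) \subseteq [0,e]$ and $[0,e] \subseteq I(e)$ separately, exploiting the description of principal ideals from Theorem \ref{thm: charact prin ideal} together with the algebraic identities satisfied by a central element $e$ coming from Lemma \ref{lem: cntrl-nvltv} and Theorem \ref{prop: description of central elements in Church varieties}.

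For the inclusion $[0,e] \subseteq I(e)$, I would take an arbitrary $a \leq e$ and exhibit a unary polynomial $p$ with $p(0) = 0$ and $p(e) = a$. The natural candidate is $p(x) = q(x, a, 0) = x \cdot a$ (using the witness term $q$ from Lemma \ref{prop:Church variety}), since $p(0) = 0 \cdot a = 0$ and, because $a \leq e$ means $e \cdot a = a$ by a short computation with the centrality equations (one expects $q(e, a, 0) = e \cdot a$, and then $e \cdot a = a$ follows from $a \leq e$ via Lemma \ref{lem: cntrl-nvltv}(2) and idempotency together with distributivity, writing $a = a \cdot 1 \geq a \cdot \cdots$; this is the sort of routine manipulation I would not grind through here). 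Hence $a = p(e) \in I(e)$ by Theorem \ref{thm: charact prin ideal}.

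For the inclusion $I(e) \subseteq [0,e]$, I would again use Theorem \ref{thm: charact prin ideal}: every element of $I(e)$ has the form $p(e)$ for a unary polynomial $p$ with $p(0) = 0$. The key claim is that for such $p$ one has $p(e) \leq e$, equivalently $e \cdot p(e) = p(e)$, equivalently $e + p(e) = p(e)$ in the semilattice order — no wait, one wants $p(e) \leq e$, i.e. $p(e) + e = e$. The cleanest route is to observe that $e \cdot p(e) = p(e \cdot e) = p(e)$ would follow if $p$ commuted appropriately with multiplication by $e$; more robustly, since $e$ is central, the map $h_e(b) = e \wedge b = e \cdot b$ is a homomorphism onto $\mathbf{A}_e$ (Theorem \ref{th: decomposizione Church algebras}(1)), and $\mathbf{A} \cong \mathbf{A}_e \times \mathbf{A}_{e^*}$. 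Under this decomposition, $p(0) = 0$ forces the $\mathbf{A}_{e^*}$-component of $p(e)$ to be $0$ (since the $e^*$-component of $e$ is $0$ and $p$ respects the product decomposition componentwise), so $p(e)$ lives entirely in $\mathbf{A}_e$, i.e. $p(e) = e \cdot p(e) \leq e$. This gives $I(e) \subseteq [0,e]$.

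The main obstacle will be making the second inclusion fully rigorous: one must be careful that "unary polynomial with $p(0)=0$" interacts correctly with the direct decomposition, since $p$ may involve arbitrary parameters from $A$, and one needs that the $e^*$-component of $p(e)$ equals the value of the corresponding polynomial (with the projected parameters) evaluated at the $e^*$-component of $e$, which is $0$ — and then invoke $p(0) = 0$ componentwise. This is exactly where Theorem \ref{th: decomposizione Church algebras} and the fact that $h_{e^*}$ is a homomorphism sending $e$ to $0$ do the work: applying $h_{e^*}$ to the equation $p(e)$ (viewed as a term evaluated at $e$ and parameters) yields the polynomial evaluated at $h_{e^*}(e) = 0$ and the projected parameters, which is $0$ by hypothesis. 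Once this point is handled, both inclusions close and the theorem follows.
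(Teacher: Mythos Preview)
Your argument is correct, but it proceeds along a different line from the paper's own proof.

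For the inclusion $I(e)\subseteq[0,e]$, both you and the paper ultimately rely on the decomposition $\alga\cong\alga_e\times\alga_{e^\alpha}$ from Theorem~\ref{th: decomposizione Church algebras}. The paper, however, does this abstractly: it observes that $[0,e]$ is the $0$-coset of the factor congruence $\ker(\pi_2\circ f)$, hence an ideal containing $e$, so $I(e)\subseteq[0,e]$ by minimality. You instead unpack this via the polynomial description of Theorem~\ref{thm: charact prin ideal} and the homomorphism $h_{e^\alpha}$, which is more explicit but amounts to the same thing. (Your concern about parameters is well-placed but, as you note, is exactly handled by the fact that $h_{e^\alpha}$ is a homomorphism sending $e$ to $0$.)

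The real divergence is in the reverse inclusion $[0,e]\subseteq I(e)$. You exhibit, for each $a\le e$, the polynomial $p(x)=x\cdot a$ and use the computation $e\cdot a=a$ (which is indeed routine: $a=(e+e^\alpha)a=ea+e^\alpha a$ and $e^\alpha a\le e^\alpha e=0$ by left distributivity and centrality). The paper does \emph{not} argue this inclusion directly at all. Instead, it shows that $I(e)$ and $I(e^\alpha)$ themselves form a pair of factor ideals (because $e+e^\alpha=1$ forces $I(e)\vee I(e^\alpha)=A$, while $I(e)\cap I(e^\alpha)\subseteq[0,e]\cap[0,e^\alpha]=\{0\}$), and then appeals to the distributivity of $\ida$ (Theorem~\ref{thm: isom ida con}) to conclude that the two pairs of complementary factor ideals $(I(e),I(e^\alpha))$ and $([0,e],[0,e^\alpha])$, one contained in the other, must coincide.

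Your approach is more elementary and self-contained, leaning on the polynomial description of principal ideals; the paper's is more structural, working entirely in the ideal lattice and exploiting uniqueness of complements in a distributive lattice. Either is a perfectly good proof of the theorem.
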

\begin{proof}
By Theorem \ref{th: decomposizione Church algebras}, $e$ is central if and only if, for any $a\in A$, the mapping $f: A\rightarrow [0,e]\times [0,e\al]$, defined by $f(a)\mapsto (h_{e} (a),h_{e\al} (a))$, is a direct decomposition of $\mb A$. Let $\theta_{1}$ and $\theta_{2}$ be the factor congruences associated to $\ker(\pi_{2}\circ f)$ and $\ker(\pi_{1}\circ f)$, respectively, where $\pi_{i}$ ($i\in\{1,2\}$) is the natural projection map. We denote by $I_{i}$ ($i=1,2$) these kernels. Then, $e\in I_{1}$ and $e\al\in I_{2}$. Thus, $I(e)\subseteq I_{1} =[0,e]$ and $I(e\al)\subseteq I_{2}=[0,e\al]$. Hence, $I(e)\cap I(e\al)=\{ 0\}$. It is clear that, for a central element $e$, one has that $1=e+e\al\in I(e)\vee I(e\al)$. So $I(e)\lor I(e\al)= A$. Hence, $I(e)$ and $I(e\al)$ form a pair of factor ideals with $I(e)\subseteq I_{1}$, $I(e\al)\subseteq I_{2}$. Since $I_{1}$ and $I_{2}$ are factor ideals, we have that
	$$I(e)=I_{1} =[0,e].$$%\quad \mbox{and}\quad I(e\al)=I_{2}=[0,e\al].$$
\end{proof}
{A few basic results about the pseudocomplements are subsumed in the following lemma.}
\begin{lemma}\label{prop: annhil ideal}
Let $\Lns$ be a \L ukasiewicz near semiring, $I,J\in\ida$ and $a\in A$. Then:
\begin{enumerate}
\item $I\subseteq {I\cmp}\cmp$;
\item If $I\subseteq J$, then $J\cmp\subseteq I\cmp$;
\item $I\cmp={{I\cmp}\cmp}\cmp$;
\item $(I, I\cmp)$ is a pair of factor ideals if and only if $I\vee I\cmp=A$;
\item $(I(a),I(a\al))$ is a pair of factor ideals if and only if 
\[
I(a\al)=I(a)\cmp\mbox{ and } I(a)\vee I(a\al)=A.
\]
\end{enumerate}
\end{lemma}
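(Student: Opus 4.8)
The plan is to prove each item largely by invoking the results already established, the most important being Theorem~\ref{thm: ida pseudocompl} (existence of pseudocomplements), the infinite join distributivity \eqref{eq: 2}, Theorem~\ref{thm: char idl prin centr} ($I(e)=[0,e]$ for central $e$), and the dictionary between factor ideals and factor congruences. I would first record that $\ida$ is a pseudocomplemented, algebraic, distributive lattice with least element $\{0\}$ and greatest element $A$; items (1), (2), (3) are then exactly the three classical identities valid in any pseudocomplemented lattice, so I would either cite a standard reference (e.g.\ \cite{Gratzer:2011}) or give the one-line arguments: (1) $I\wedge I^{*}=\{0\}$ forces $I\subseteq I^{**}$ by the defining maximality of $I^{**}$; (2) monotonicity is immediate from the definition, since $J\cap I^{*}\subseteq \dots$ wait — more carefully, if $I\subseteq J$ then $J^{*}\cap I\subseteq J^{*}\cap J=\{0\}$, so $J^{*}\subseteq I^{*}$ by maximality of $I^{*}$; (3) apply (1) to $I^{*}$ to get $I^{*}\subseteq I^{***}$, and apply (2) to $I\subseteq I^{**}$ to get $I^{***}\subseteq I^{*}$.

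For item (4) the forward direction is trivial: if $(I,I^{*})$ is a pair of factor ideals then by definition $I\vee I^{*}=A$. For the converse, suppose $I\vee I^{*}=A$; since by definition of the pseudocomplement $I\cap I^{*}=\{0\}$, the two defining conditions of a factor-ideal pair hold, so $(I,I^{*})$ is a pair of factor ideals. (One may additionally remark, using the isomorphism $\ida\cong\Con$ of Theorem~\ref{thm: isom ida con} and Theorem~\ref{th: Boolean algebra of centrals}, that this is equivalent to $\theta(I)$ and $\theta(I^{*})$ forming a pair of factor congruences, but this is not needed for the statement.)

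Item (5) is where the central-element theory enters, and I expect it to be the only part requiring real work. The forward direction: assume $(I(a),I(a\al))$ is a pair of factor ideals, so $I(a)\cap I(a\al)=\{0\}$ and $I(a)\vee I(a\al)=A$. From $I(a)\cap I(a\al)=\{0\}$ and the maximality characterization of the pseudocomplement we get $I(a\al)\subseteq I(a)^{*}$; conversely $I(a)^{*}\cap I(a)=\{0\}$ together with $I(a)\vee I(a\al)=A$ and the infinite join distributivity \eqref{eq: 2} gives $I(a)^{*}=I(a)^{*}\cap A=I(a)^{*}\cap(I(a)\vee I(a\al))=(I(a)^{*}\cap I(a))\vee(I(a)^{*}\cap I(a\al))=I(a)^{*}\cap I(a\al)\subseteq I(a\al)$, whence $I(a)^{*}=I(a\al)$; and $I(a)\vee I(a\al)=A$ was assumed. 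The converse direction is immediate: if $I(a\al)=I(a)^{*}$ and $I(a)\vee I(a\al)=A$, then $I(a)\cap I(a\al)=I(a)\cap I(a)^{*}=\{0\}$, so the pair is a factor pair. The main obstacle, if any, is making sure the pseudocomplement facts are deployed in the right order — in particular that the step $I(a)^{*}\subseteq I(a\al)$ genuinely uses the hypothesis $I(a)\vee I(a\al)=A$ via distributivity, and is not available from $I(a)\cap I(a\al)=\{0\}$ alone; I would state this explicitly so the reader sees where each hypothesis is consumed.
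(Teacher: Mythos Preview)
Your proposal is correct and follows essentially the same approach as the paper: items (1)--(4) are dismissed as straightforward consequences of $\ida$ being a pseudocomplemented lattice, and for (5) one shows $I(a\al)\subseteq I(a)^{*}$ immediately and then obtains the reverse inclusion from the hypothesis $I(a)\vee I(a\al)=A$. If anything, your argument for the inclusion $I(a)^{*}\subseteq I(a\al)$ via distributivity is more explicit than the paper's, which reasons elementwise (``if $b\in I(a)^{*}$ then $b=0$ or $b\notin I(a)$ and $b\in I(a\al)$'') in a way that tacitly presupposes exactly the distributive step you spell out.
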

\begin{proof} (1), (2), (3) and (4) are straightforward. For (5), if $(I(a),I(a\al))$ is a pair of factor ideals, obviously $I(a)\vee I(a\al)=A$. Now, clearly $I(a\al)\subseteq I(a)\cmp$. Furthermore, if $b\in I(a)\cmp$,  then $b=0$ or $b\notin I(a)$ and $b\in I(a\al)$ for $I(a)\vee I(a\al)=A$. In any case one has  $b\in I(a\al)$. The converse follows immediately.
\end{proof}

Let us now briefly elaborate on the previous results.
Consider a \L ukasiewicz near semiring $\Lns$, and let $\skel$ be the \textit{skeleton} of $\ida$, namely 
\[
\skel =\{ I^{\ast}|I\in\ida\}.
\]
 
By a theorem due to V. Glivenko, later proved in its full generality by O. Frink (see e.g. \cite{Gratzer:2011}), since (by Theorems \ref{thm: isom ida con} and \ref{thm: ida pseudocompl}) $\ida$ is an algebraic pseudocomplemented lattice, it turns out that $\skel$ is a Boolean lattice bounded by the trivial ideals $\{0\}$ and $A$. With a slight abuse of language, we may identify the skeleton with the Boolean algebra $\skel= \langle\skel ,\wedge,\vee,^{\ast},\{0\},A\rangle$ where, for any $I,J\in\skel$, $\wedge$ is $\cap$, $\vee$ is defined by $I\vee J=(I^{\ast}\wedge J^{\ast})^{\ast}$. Trivially, for any $I\in\skel$, $I$ and $I^{\ast}$ form a pair of complementary factor ideals. 

Now, by Theorem \ref{thm:idl-cngrnc}, if $I,I^{\ast}\in\skel$, then $I=[0]_{\theta(I)}$ and $I^{\ast}=[0]_{\theta(I^{\ast})}$. By $I\vee I^{\ast}=A$ one obviously has that $\theta(I)\vee\theta(I^{\ast})=\nabla$ and $I\wedge I^{\ast}=\{0\}$ implies that $\theta(I)\wedge\theta(I^{\ast})=\Delta$.

Conversely, if $(\theta,\theta')$ is a pair of complementary factor congruences, then their $0$-cosets, say $I$ and $J$, respectively, form a pair of complementary factor ideals. Indeed, it is easily seen that $I=J^{\ast}$ and $J=I^{\ast}$. 

In fact, by Lemma \ref{prop: annhil ideal}, one has that $I\subseteq I^{\ast\ast}$ and $J\subseteq J^{\ast\ast}$. Hence, $I^{\ast\ast}\vee J^{\ast\ast}=A$. Moreover, since $I$ and $J$ form a pair of complementary factor ideals, one has $I\subseteq J^{\ast}$ and $J\subseteq I^{\ast}$. Thus, by (2) of Lemma \ref{prop: annhil ideal}, $I^{\ast\ast}\subseteq J^{\ast}$ implies that if $x\in I^{\ast\ast}$ and $x\neq0$, then $x\in J^{\ast}$ and $x\notin J^{\ast\ast}$. Hence, one has that $I^{\ast\ast}\wedge J^{\ast\ast}=\{0\}$ and, by the unicity of complements in $\skel$, we can conclude that $I^{\ast\ast}=J^{\ast}$ and $J^{\ast\ast}=I^{\ast}$. In fact, suppose \emph{ex absurdo} that $x\notin J$ and $x\in J^{\ast\ast}$. Hence, $x\notin J^{\ast}=I^{\ast\ast}$ and $x\notin I$. So $I\subseteq J$ and since $I\cap J=\{0\}$ this is a contradiction. 
This implies that $J=J^{\ast\ast}=I^{\ast}$. Similarly, $J^{\ast}=I$. Then, we can conclude that there is a one-to-one correspondence between pairs of complementary factor ideals in $\skel$ and pairs of complementary factor congruences in $\Con$. 

Since $\alga$ is congruence-distributive (see \cite{BCL}) one has that the sublattice of $\Con$ that contains all pairs of complementary factor congruences on $\alga$, that we denote by $\Con_{\mathrm{F}}$, is Boolean. Exploiting the same mapping $f$ of Theorem \ref{thm: isom ida con}, one can easily observe that $\skel\cong\Con_{\mathrm{F}}$. 

Finally, by Theorem 3.7 in \cite{Sal} one has that $\Con_{\mathrm{F}}\cong\mathrm {Ce}(\mathbf{A})$, where $\mathrm {Ce}(\mathbf{A})$ is the Boolean lattice of central elements of $\alga$. In particular, it shows that the map $e\mapsto\theta(e,0)$ is a bijective correspondence between $\mathrm{Ce}(\alga)$ and $\Con_{\mathrm{F}}$.
Moreover, for any $e,d\in\mathrm{Ce}(\alga)$, the elements $e\al$, $e\wedge d$, $e\vee d$ are central and naturally associated with the factor congruences $\theta(e,1)=\theta(e\al,0)$,  $\theta(e,0)\cap\theta(d,0)$ and $\theta(e,0)\vee\theta(d,0)$, respectively. Hence, for any pair of complementary factor congruences $(\theta,\theta')$ one has that $(\theta,\theta')=(\theta(e,0),\theta(e\al,0))$, for some $e\in\mathrm{Ce}(\alga)$. 

Summarizing the observations above, we have that $\skel$ coincides with the Boolean lattice of pairs of complementary factor ideals $(I,I^{\ast})$ in $\ida^{2}$, which are nothing but $0$-cosets of pairs of complementary factor congruences of the form ($\theta(e,0),\theta(e\al,0)$), for an element $e$ in $\mathrm{Ce}(\alga)$. Thus, it directly follows that $$\skel =\{I(e)|e\in\mathrm {Ce}(\mathbf{A})\}$$ and by Theorem \ref{thm: char idl prin centr}, $I^{\ast}=[0,e]$, for some $e\in\mathrm{Ce}(\alga)$. 

\section{A Cantor-Bernstein-type Theorem for $\iota$-near semirings}\label{sec:3}
We close this article with an application of the theory of central elements in $\iota$-near semirings. Namely, we propose a version of the Cantor-Bernstein Theorem for join $\sigma$-complete $\iota$-near semirings, with $\sigma$-complete algebras of central elements. More specifically, for a $\iota$-near semiring $\alga$, if $\{a_{i}\}_{i\in I}$, such that $|I|\leq \sigma$, then $\bigvee_{i\in I}a_{i}$ exists, and $\mathrm{Ce}(\alga)$ is a $\sigma$-complete Boolean algebra. 
This result was first shown in \cite{Sikorski} (see also \cite{Tarski}) for Boolean algebras and subsequently extended to MV-algebras (with Boolean elements), orthomodular lattices, and other classes of algebras enjoying suitable properties, such as having an underlying lattice structure (see \cite{DeSimone}, \cite{Freytes}). Since \L ukasiewicz near semirings generalize the notion of MV-algebra, it is natural to wonder whether a version of the Cantor-Bernstein Theorem could be widened for weaker structures, like $\iota$-near semirings.

Upon recalling that central elements commute with any other element (cf. Lemma \ref{lem: cntrl-nvltv}), in order to prove the main result of this section, we start with the following
\begin{lemma}\label{prprts-CB}
Let $\alga$ be a $\iota$-near semiring, and $e\in\mathrm{Ce}(\alga)$, and $a,b\in A$.
\begin{enumerate}
%\item if $e\leq a$, then $ae\leq a$;
%\item if $a\leq e$, then $ae\leq a$;
\item if $a\leq e$, then $ae=a$;
\item $eb=e\land b$;
\item if $\{a_{i}\}_{i\in N}\subseteq A$, then 
\[
e\land(\Sigma_{n\in N}a_{n})=\Sigma_{n\in N}(e\land a_{n}).
\]
\end{enumerate}
\end{lemma}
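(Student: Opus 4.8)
The plan is to prove the three items in order, using the characterization of central elements from Theorem \ref{prop: description of central elements in Church varieties} together with Lemma \ref{lem: cntrl-nvltv} (idempotency, commutativity, weak associativity of multiplication by a central element) and Lemma \ref{prop:Church variety} (the witness term $q(x,y,z)=(x\x y)+(x\al\x z)$).

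For item (1), suppose $a\leq e$, i.e.\ $a+e=e$. The key observation is that right-distributivity (Definition \ref{def: Lnrsmrng}(iii)) gives $ae = ae$, and I would compute $e\x e$ versus how $a$ sits below $e$. Concretely: from $a\leq e$ and antitony (vi), together with the interplay of $+$ and $\x$, one expects $ae + a = a$ by distributing; more directly, since $a = a\x 1 = a\x(e+e\al) = ae + ae\al$ is not immediate (no left distributivity in general), I would instead use $q$. Note $q(e,a,a)=a$ by Theorem \ref{prop: description of central elements in Church varieties}(a), so $(e\x a)+(e\al\x a)=a$. Since $a\leq e$, we have $e\al\leq a\al$, hence $a\x e\al \leq a\x a\al = 0$ by Proposition \ref{prp: lmm3}(a) and monotonicity of $\x$ in the right argument (which follows from (iii)): from $e\al\le a\al$ we get $a e\al \le a a\al = 0$, so $ae\al = 0$. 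Wait — monotonicity in the right argument is not among the listed axioms; instead I would argue $e\al \le a\al$ gives $e\al + a\al = a\al$, so $a(e\al+a\al) = ae\al + aa\al = ae\al + 0 = ae\al$, while also $a(e\al+a\al)=aa\al=0$; hence $ae\al=0$. Plugging into $(e\x a)+(e\al\x a)=a$ and using commutativity $e\al\x a = a\x e\al = 0$ yields $ea = a$ (using commutativity from Lemma \ref{lem: cntrl-nvltv}(2) to pass between $ea$ and $ae$). The main obstacle here is precisely the absence of general left-distributivity, so care is needed to only distribute on the correct side; the computation above sidesteps this.

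For item (2), $x\wedge y$ is defined as $q(x,y,0)=(x\x y)+(x\al\x 0)=(x\x y)+0=x\x y$, so $e\wedge b = eb$ is essentially immediate from the definitions and axiom (iv). Finally, item (3) is the statement that $e\wedge(-)$ preserves the existing joins $\Sigma_{n\in N}a_n$. For finite $N$ this follows from Theorem \ref{prop: description of central elements in Church varieties}(c) applied to $f=+$ (equivalently, it is the content of the computation ``if $f$ is $+$'' displayed just before Lemma \ref{prop:Church variety}), which gives $e\wedge(a_1+a_2) = (e\wedge a_1)+(e\wedge a_2)$; induction extends this to any finite join. For the general (infinite) case, I would invoke item (2) to rewrite everything as multiplication by $e$, and then argue that $b\mapsto eb$ is the homomorphism $h_e$ of Theorem \ref{th: decomposizione Church algebras}(1) onto $\mathbf{A}_e$ composed with the inclusion; since $h_e$ is a lattice homomorphism onto $\mathbf{A}_e=[0,e]$ and $[0,e]$ is a complete-enough sublattice under the assumed join-$\sigma$-completeness (any existing join $\Sigma_{n} a_n$ is sent to the corresponding join in $[0,e]$), the identity $e\wedge \Sigma_n a_n = \Sigma_n (e\wedge a_n)$ holds. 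The delicate point is ensuring that the infinite join on the right actually exists and equals $h_e$ of the left-hand join; this follows because $h_e$ preserves $\le$, so $\Sigma_n(e\wedge a_n) \le e\wedge\Sigma_n a_n$ is automatic, and the reverse inequality uses that $e\wedge\Sigma_n a_n$ is an upper bound argument via $h_e(\Sigma_n a_n)=\Sigma_n h_e(a_n)$ since $h_e$ is a homomorphism onto a $\sigma$-complete structure. That verification — translating the homomorphism property into preservation of the relevant infinite suprema — is the part I expect to require the most care.
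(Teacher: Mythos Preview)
Your argument for item~(1) contains a genuine gap. After correctly reducing to showing $ae\al=0$ (equivalently $e\al a=0$), you propose to compute
\[
a(e\al+a\al)=ae\al+aa\al,
\]
but this step is \emph{left} distributivity, which is precisely the law that $\iota$-near semirings need not satisfy (only $(x+y)z\approx xz+yz$ is assumed). You flag the absence of left distributivity yourself, yet the ``sidestep'' computation uses it. In addition, you invoke $aa\al=0$ from Proposition~\ref{prp: lmm3}(a), but that proposition is stated for \L ukasiewicz near semirings, not for arbitrary $\iota$-near semirings, so it is not available at this level of generality.

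The paper's proof closes the gap with a one-line observation you missed: monotonicity of $\cdot$ in the \emph{left} argument \emph{does} follow from axiom~(iii), so from $a\le e$ one gets $ae\al\le ee\al$, and $ee\al=0$ holds because $e$ is central (it is $e\wedge e^{*}=0$ in $\mathrm{Ce}(\alga)$). No appeal to $aa\al=0$ or left distributivity is needed. Your route via $e\al\le a\al$ forces you onto the wrong side of the product.

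Two smaller remarks. For item~(2), reading $e\wedge b$ as the term $q(e,b,0)$ makes the claim a tautology; the paper instead proves that $eb$ is the order-theoretic infimum of $e$ and $b$ (showing $eb\le e,b$ and that any common lower bound $a$ satisfies $a\le eb$ via item~(1)), which is the content actually used later when infima over non-central elements appear. For item~(3), the paper only argues the finite case by induction on $n$, exactly along the lines of your first paragraph; your discussion of the infinite case is not needed for this lemma.
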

\begin{proof}
%(1) if $e\leq a$, then $a+e=a$. Now, $ae + a=ea +a=(e+1)a=1a=a$. 
%
%(2) along the lines of (1). 

(1) If $a\leq e$, then $ae\al\leq ee\al=0$, because $e$ is central. Therefore, $ea=ea+0=ea+e\al a$, because $e\al $ commutes, and so $ ae=ea=(e+e\al)a=1a=a$, because $e+e\al=e\lor e\al=1$. %If $e \leq a$, then a straightforward computation shows that $ae=a$.

(2) First, observe that $be\leq b,e$, by \cite[Lemma 1]{BCL}. If $a\leq e, b$, then $a+e=e$ and $b+a=b$. Note that $eb+a=eb+ea$, by the previous item, and $eb+ea=e(b+a)=eb$, because $e$ is central and therefore commutes. Therefore, $a\leq eb=e\land b$. 

(3) By induction on $n$. If $n=0$, the claim is obvious. Suppose that the statement is true for $n-1$. Then $(\Sigma_{i=1}^{n}a_{i})\land e=(\Sigma_{i=1}^{n}a_{i})e=(\Sigma_{i=1}^{n-1}a_{i}+a_{n})e=\Sigma_{i=1}^{n-1}a_{i}e+a_{n}e=\Sigma_{i=1}^{n}a_{i}e=\Sigma_{n\in N}(e\land a_{n})$.
\end{proof}
When there is no confusion possible, we will use $\cdot$ and $\land$ ($+$ and $\lor$) as synonyms, respectively.  
The following lemma completes the results of the previous lemma.
\begin{lemma}\label{lemm: hmmrpsms cntrl}
Let $\alga,\algb$ be a join $\sigma$-complete near semirings and $\gamma:\alga\to\algb$ an isomorphism. Then,
\begin{enumerate}
\item if $a\in \mathrm{Ce}(\alga)$, then $\gamma(a)\in \mathrm{Ce}(\algb)$;
\item if $a\in \mathrm{Ce}(\alga)$, then $\gamma\upharpoonright[0,a]$ is isomorphic to $[0,\gamma(a)]$;
\item if $\{a_{i}\}_{i\in N}\subseteq \mathrm{Ce}(\alga)$, $\bigvee_{n\in N}a_{n}=1$, and for $i\neq j$ $a_{i}\land a_{j}=0$, then $\mathbf A$ is isomorphic to $\Pi_{n\in N}[0,a_{n}]$.
\end{enumerate}
\end{lemma}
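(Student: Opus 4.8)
The three items build on one another, so I would prove them in order. For item (1), the key observation is that being central is an equational condition relative to the witness term $q$: by Theorem \ref{prop: description of central elements in Church varieties}, $a\in\mathrm{Ce}(\alga)$ iff $a$ satisfies the identities a)--d) there (with $q(x,y,z)=xy+x\al z$ by Lemma \ref{prop:Church variety}). Since $\gamma$ is an isomorphism, it preserves and reflects all term operations and hence all these identities; therefore $\gamma(a)$ satisfies a)--d) in $\algb$, so $\gamma(a)\in\mathrm{Ce}(\algb)$. (One must also check that $\gamma$ carries $0,1$ to $0,1$, which is immediate since these are term-definable — indeed nullary operations — in $\iota$-near semirings.)

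For item (2), by Theorem \ref{th: decomposizione Church algebras}(1) the map $h_a:\alga\to\alga_a$, $h_a(b)=a\wedge b=ab$, is a surjective homomorphism onto the algebra $\alga_a$ on the universe $[0,a]=\{a\wedge b:b\in A\}$, and similarly $h_{\gamma(a)}:\algb\to\algb_{\gamma(a)}$ onto $[0,\gamma(a)]$. I would show the composite $h_{\gamma(a)}\circ\gamma:\alga\to[0,\gamma(a)]$ factors through $h_a$ and induces an isomorphism $[0,a]\to[0,\gamma(a)]$; concretely, the restriction $\gamma\!\upharpoonright[0,a]$ already lands in $[0,\gamma(a)]$ because $x\le a$ implies $\gamma(x)\le\gamma(a)$ (isomorphisms preserve the induced order, as $x+a=a$ gives $\gamma(x)+\gamma(a)=\gamma(a)$), it is injective and surjective onto $[0,\gamma(a)]$ by the same reasoning applied to $\gamma^{-1}$, and it is a homomorphism for the operations $g_e$ of \eqref{eq:opAe} since, e.g., $\gamma(a\wedge g(a\wedge\overline b))=\gamma(a)\wedge g(\gamma(a)\wedge\gamma(\overline b))$ by Lemma \ref{lem: cntrl-nvltv} and the homomorphism property. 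This is essentially the statement that an isomorphism restricts to an isomorphism of the corresponding direct factors.

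For item (3), the hypotheses say $\{a_n\}_{n\in N}\subseteq\mathrm{Ce}(\alga)$ is a partition of $1$ (pairwise meets $0$, join $1$). The plan is to iterate Theorem \ref{th: decomposizione Church algebras}(2): for a single central $e$ one has $\alga\cong\alga_e\times\alga_{e\cmp}=[0,e]\times[0,e\cmp]$. Since the centre is a Boolean algebra (Theorem \ref{th: Boolean algebra of centrals}) and finite partitions of unity in a Boolean algebra decompose it as a finite product, in the finite case one gets $\alga\cong\Pi_{n\in N}[0,a_n]$ by induction on $|N|$, using at each step that $a_1\vee\cdots\vee a_k$ is central with complement $a_{k+1}\vee\cdots\vee a_n$ and that $[0,a_1\vee\cdots\vee a_k]\cong\Pi_{i\le k}[0,a_i]$ — the latter because the $a_i$ restrict to a partition of unity of the central elements of $[0,a_1\vee\cdots\vee a_k]$. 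The map realizing the isomorphism is $a\mapsto(a\wedge a_n)_{n\in N}=(a\,a_n)_{n\in N}$, with inverse $(b_n)_{n\in N}\mapsto\Sigma_{n\in N}b_n$; that these are mutually inverse homomorphisms uses precisely Lemma \ref{prprts-CB}(3) (distribution of a central meet over the finite sum) together with $\Sigma_n a_n=1$ and pairwise orthogonality.

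\textbf{Main obstacle.} If $N$ is genuinely infinite, the naive induction breaks down and one cannot simply iterate the binary decomposition: the issue is whether the map $a\mapsto(a\wedge a_n)_{n\in N}$ is still onto $\Pi_{n\in N}[0,a_n]$, i.e.\ whether an arbitrary family $(b_n)_{n\in N}$ with $b_n\le a_n$ can be recovered as $a\wedge a_n$ for $a=\bigvee_n b_n$ — which is exactly where join $\sigma$-completeness of $\alga$ (so that $\bigvee_n b_n$ exists when $|N|\le\sigma$) and $\sigma$-completeness of $\mathrm{Ce}(\alga)$ are used, via the infinitary distributive law of Lemma \ref{prprts-CB}(3) extended to $\sigma$-sums. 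Establishing that infinitary distributivity $e\wedge\bigvee_n a_n=\bigvee_n(e\wedge a_n)$ for central $e$ — and hence that $a\wedge a_m=\bigvee_n(b_n\wedge a_m)=b_m$ by orthogonality — is the technical heart of the argument; the homomorphism property and injectivity are then routine, the latter because $a=a\wedge 1=a\wedge\bigvee_n a_n=\bigvee_n(a\wedge a_n)$ recovers $a$ from its components.
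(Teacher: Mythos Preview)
Your proposal is correct and follows the same route as the paper: items (1)--(2) are dismissed there as ``straightforward'', and for (3) the paper uses precisely your map $a\mapsto(a\wedge a_n)_{n\in N}$, checking surjectivity via the preimage $\bigvee_n b_n$ and deferring the homomorphism property to general results on Church algebras. Your flagged obstacle --- the infinitary distributive law $e\wedge\bigvee_n b_n=\bigvee_n(e\wedge b_n)$ for central $e$ --- is genuine and is used tacitly in the paper (for surjectivity, and implicitly also for injectivity via $a=\bigvee_n(a\wedge a_n)$), even though Lemma~\ref{prprts-CB}(3) is only proved by finite induction; so your caution there is well placed rather than a divergence from the paper.
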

\begin{proof}
(1) and (2) are straightforward. (3) let $\{a_{i}\}_{i\in N}$ be a family of central elements with the required properties. Let us call $\beta $ the map from $\alga $ to $\Pi_{n\in N}[0,a_{n}]$ defined, for $a\in A$, by $a\mapsto (a\land e_{i}:i\in N)$. Clearly, if $i\neq j$, then, in case $b\leq a_{i},a_{j}$, we have that $b\leq a_{i}\land a_{j}=0$. Thus, $[0,a_{i}]\cap [0,a_{j}]=\{0\}$, which implies injectivity. Clearly, $\beta(1)=\beta(\bigvee_{n\in N}a_{n})=(a_i:i\in N)$. Let $(b_i:i\in N)$. Then, $\beta(\bigvee_{i\in N}b_{i})=((\bigvee_{i\in N}b_{i})\land a_{i}:i\in N)=(\bigvee_{i\in N}b_{i}\land a_{i}:i\in N)=(b_{i}:i\in N)$. The fact that $\beta$ preserves the operations directly follows from general results on central elements in a Church algebra \cite{Sal}.
\end{proof}
We now have all the elements required for proving our main theorem. Recall that, given a near semiring $\alga$ and $a\in\mathrm{Ce}(\alga)$, the interval $[0,a]$ is an algebra whose operations are the same as in $\alga$ although adequately ``constrained'' to the considered subset of $A$ (see Theorem \ref{th: decomposizione Church algebras}).
\begin{theorem}\label{thm:cantorbernstein}
	Let $\Lns$ and $\Lnsb$ be join $\sigma$-complete $\iota$-near semirings, such that $\mathrm{Ce}(\alga)$ and $\mathrm{Ce}{(\algb)}$ are $\sigma$-complete Boolean algebras. If $\alga\cong [0,b]$ and $\algb\cong [0,a]$ with $b\in\mathrm{Ce}(\algb)$ and $a\in\mathrm{Ce}{(\alga)}$, then $\alga\cong\algb$.  
\end{theorem}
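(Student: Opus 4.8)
The plan is to mimic the classical Sikorski--Tarski proof of the Cantor--Bernstein theorem, transported to the setting of $\iota$-near semirings by means of central elements and the direct-decomposition machinery of Section~\ref{sec:2}. The key point throughout is Lemma~\ref{lem: cntrl-nvltv} and Lemma~\ref{prprts-CB}: a central element $e$ commutes with everything and the interval $[0,e]$ behaves as a well-behaved direct factor, with $e\wedge-$ a homomorphism onto it. First I would fix isomorphisms $\varphi\colon\alga\to[0,b]\subseteq\algb$ and $\psi\colon\algb\to[0,a]\subseteq\alga$. Since $a\in\mathrm{Ce}(\alga)$ and $b\in\mathrm{Ce}(\algb)$, and since by Lemma~\ref{lemm: hmmrpsms cntrl}(1) isomorphisms carry central elements to central elements, I can build a strictly decreasing (in the Boolean-algebra-of-centrals sense) sequence of central elements by iterating the composite $\psi\circ\varphi$. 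Concretely, set $c_0=1_{\alga}$, $c_1=a$, and recursively $c_{n+2}=\psi(\varphi(c_n)\wedge b)$ — each $c_n$ is central in $\alga$ by Lemma~\ref{lemm: hmmrpsms cntrl}(1), and one checks $c_{n+2}\le c_{n+1}\le c_n$ using that $\varphi$, $\psi$ are order-isomorphisms onto the respective intervals.

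Next I would form the ``differences'' $d_n=c_n\wedge c_{n+1}^{*}$ (pseudocomplement taken inside the Boolean algebra $\mathrm{Ce}(\alga)$, which is available and $\sigma$-complete by hypothesis) together with the ``limit'' element $d_\infty=\bigwedge_{n}c_n$ — this infimum exists because $\mathrm{Ce}(\alga)$ is $\sigma$-complete and the family is countable. The standard verification shows that the central elements $\{d_0,d_1,d_2,\dots\}\cup\{d_\infty\}$ are pairwise disjoint (meet $0$) and join to $1$ in $\mathrm{Ce}(\alga)$. By Lemma~\ref{lemm: hmmrpsms cntrl}(3) this yields a direct decomposition
\[
\alga\;\cong\;\Bigl(\textstyle\prod_{n<\omega}[0,d_n]\Bigr)\times[0,d_\infty].
\]
Performing the same construction on the $\algb$ side starting from $b$ — or, more economically, transporting the $d_n$ through $\varphi$ — produces an analogous decomposition of $\algb$ with factors $[0,e_n]$ and $[0,e_\infty]$, where $e_n=\varphi(d_n)\wedge b$ and $e_\infty=\bigwedge_n e_n$, all central in $\algb$ by Lemma~\ref{lemm: hmmrpsms cntrl}(1) again.

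The heart of the argument is then the matching of factors. The isomorphism $\varphi$ restricted to $[0,d_n]$ is, by Lemma~\ref{lemm: hmmrpsms cntrl}(2), an isomorphism onto $[0,\varphi(d_n)]$; since $d_n\le c_n$ and $\varphi(c_n)\wedge b$ is precisely the central element whose interval $\psi$ maps $[0,c_{n+2}]$ onto, a careful index bookkeeping shows $\varphi$ carries $[0,d_n]$ isomorphically onto $[0,e_n]$ when $n$ is even, while $\psi$ carries $[0,e_n]$ isomorphically onto $[0,d_{n+1}]$ when $n$ is even — in other words the even-indexed $d$-factors match the even-indexed $e$-factors via $\varphi$, and the shift by one on the odd side is absorbed exactly as in the classical proof, so that after reindexing $\prod_n[0,d_n]\cong\prod_n[0,e_n]$. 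For the tails, $\varphi$ maps $[0,d_\infty]=\bigwedge_n[0,c_n]$ onto $[0,e_\infty]=\bigwedge_n[0,\varphi(c_n)\wedge b]$, again by Lemma~\ref{lemm: hmmrpsms cntrl}(2) applied in the limit (using that $\varphi$ preserves the relevant countable meets of central elements). Assembling the factor isomorphisms through the product decompositions of $\alga$ and $\algb$ gives $\alga\cong\algb$.

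The main obstacle I anticipate is precisely this index-shift bookkeeping: one must be scrupulous about which of $\varphi,\psi$ implements which factor isomorphism and about the parity shift, since the two embeddings go in opposite directions and the naive pairing is off by one. A secondary subtlety is ensuring that all the infima $\bigwedge_n c_n$, $\bigwedge_n e_n$ not only exist in the Boolean algebras of centrals but are \emph{respected} by $\varphi$ — this is where $\sigma$-completeness of both $\mathrm{Ce}(\alga)$ and $\mathrm{Ce}(\algb)$, together with the fact that an isomorphism of the full algebras restricts to an isomorphism of the central Boolean algebras preserving countable joins and meets, is genuinely used. Once these points are handled, everything else reduces to the general Church-algebra decomposition results of Section~\ref{sec:2} and the routine disjointness/cover computations in a Boolean algebra.
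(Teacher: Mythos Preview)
Your proposal is correct and follows essentially the same Sikorski--Tarski route as the paper: the paper builds the two interlocking sequences $v_n\in\mathrm{Ce}(\alga)$, $u_n\in\mathrm{Ce}(\algb)$ via $v_{n+1}=\beta(u_n)$, $u_{n+1}=\gamma(v_n)$, forms the differences $e_n=v_n\wedge v_{n+1}^{\alpha}$, $d_n=u_n\wedge u_{n+1}^{\alpha}$ and the limits $v_\infty,u_\infty$, and then matches factors exactly as you describe (your $c_n$ is their $v_n$). One slip to fix when you write it up: your limit element should be $e_\infty=\bigwedge_n \varphi(c_n)$ (the meet of the \emph{sequence}, not of the pairwise-disjoint differences $e_n$, whose meet is $0$), and the ``$\wedge\, b$'' in your recursion and in $e_n=\varphi(d_n)\wedge b$ is redundant since $\varphi$ already lands in $[0,b]$.
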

\begin{proof}
Let $\gamma:\alga\rightarrow [0,b]$ and $\beta:\algb\rightarrow [0,a]$ be isomorphisms with $a\in\mathrm{Ce}(\alga)$ and $b\in\mathrm{Ce}(\algb)$. Without loss of generality, we can safely assume that $0<a,b<1$. We recursively define, as in the proof of \cite[Theorem 4.1]{DeSimone}, the following pair of infinite sequences:
\begin{center}
$\begin{array}{lr}
v_{0}=1\quad\quad\quad\quad\quad\quad u_{0}=1\\
v_{n+1}=\beta(u_{n})\quad\quad\quad u_{n+1}=\gamma(v_{n}).
\end{array}$
\end{center}
Since $1\in \mathrm{Ce}(\alga)\cap\mathrm{Ce}(\algb)$ and $\gamma,\beta$ are isomorphisms, one has, by Lemma \ref{lemm: hmmrpsms cntrl}(1), that $u_{n}\in\mathrm{Ce}(\algb)$ and $v_{n}\in\mathrm{Ce}(\alga)$ for any $n\in N$. Indeed, by induction on $n$, we obtain that $v_{n}=\beta(u_{n-1})=\beta(\gamma(v_{n-2}))$. Since $\beta\circ\gamma$ is still an isomorphism, a straightforward application of the induction hypothesis yields $v_{n}\in \mathrm{Ce}(\alga)$. Similarly, $u_{n}\in\mathrm{Ce}(\algb)$. Furthermore, it can be seen that $$v_{0}>v_{1}>...>...\quad \mathrm{and}\quad   u_{0}>u_{1}>...>...\ .$$ 

In fact, by induction on $n$, one has that $v_{0}=1+v_{1}=1$ (since any $\iota$-near semiring is integral). Hence, $v_{0}\geq v_{1}$. Now, suppose that $v_{k}+v_{k+1}=v_{k}$ for any $k<n$. It can be seen that $v_{n}+v_{n+1}=\beta(u_{n-1})+\beta(u_{n})=(\beta\circ\gamma)(v_{n-2}+v_{n-1})=(\beta\circ\gamma)(v_{n-2})=v_{n}$. Similarly, $u_{n}+u_{n+1}=u_{n}$, for any $n\in N$. Clearly, $v_{n}\gneq v_{n+1}$ ($u_{n}\gneq u_{n+1}$) follows from the injectivity of $\beta$ ($\gamma$).

Indeed, since $\mathrm{Ce}(\alga)$ and $\mathrm{Ce}(\algb)$ are $\sigma$-complete Boolean algebras (see Theorem \ref{th: Boolean algebra of centrals}), we can define the following $$v_{\infty}=\bigwedge_{n\in N} v_{n}\quad\quad\mathrm{and}\quad\quad u_{\infty}=\bigwedge_{n\in N} u_{n}.$$ Recall that, by Lemma \ref{prprts-CB}(2), since all $v_{n}$, $n\in N$, are central, we obtain that $\bigwedge_{n\in N}v_{n}=\prod_{n\in N}v_{n}$. 
Similarly for $u_{n}$, $n\in N$. 

Moreover, a simple computation proves that $\gamma(v_{\infty})=\gamma(\bigwedge_{n\in N}v_{n})=\bigwedge_{n\in N}\gamma(v_{n})=\bigwedge_{n\in N}u_{n+1}=u_{\infty}$ as well as $\beta(u_{\infty})=v_{\infty}$. We define the following $$e_{n}=v_{n}\wedge v_{n+1}\al\quad\mathrm{and}\quad d_{n}=u_{n}\wedge u_{n+1}\al.$$ Let us note that $\gamma(e_{n})=\gamma(v_{n}\wedge v_{n+1}\al)=\gamma(v_{n})\wedge\gamma(v_{n+1})\al=u_{n+1}\wedge u_{n+2}\al=d_{n+1}$. Similarly, $\beta(d_{n})=e_{n+1}$. Now, it is easily seen that $e_{n-1}=v_{n}\al$ and $d_{n-1}=u_{n}\al$ for any $n\in N^{+}$. Indeed, since the latter case can be handled similarly, we prove the former. We have that $e_{0}=v_{0}\wedge v_{1}\al=1\x v_{1}\al=v_{1}\al$. Suppose that $e_{k-1}=v_{k}\al$ for any $k<n$. We obtain that 
$e_{n-1}=v_{n-1}\wedge v_{n}\al=(v_{n-1}\al)\al\wedge v_{n}\al=(v_{n-2}\x v_{n-1}\al)\al\x v_{n}\al=(v_{n-2}\al + v_{n-1})\x v_{n}\al$, by centrality and De Morgan laws, and then $v_{n-2}\al\x v_{n}\al +v_{n-1}\x v_{n}\al=v_{n}\al +v_{n-1}\x v_{n}\al=(v_{n-1}+1)\x v_{n}\al =v_{n}\al.$ Hence:
$$\bigvee_{n\in N^{+}}e_{n-1}=\bigvee_{n\in N^{+}}v_{n}\al=(\bigwedge_{n\in N^{+}}v_{n})\al=v_{\infty}\al$$ as well as $$\bigvee_{n\in N^{+}}d_{n-1}=u_{\infty}\al.$$ Thus, we have that $v_{\infty}\vee (\bigvee_{n\in N}e_{n})=1$ and $u_{\infty}\vee(\bigvee_{n\in N}d_{n})=1$. Furthermore, let us note that $e_{m}\wedge e_{n}=0$ and $d_{m}\wedge d_{n}=0$ for any $n\neq m$. In fact, suppose without loss of generality that $m>n$. It can be verified that $e_{m}\wedge e_{n}=v_{m}\wedge v_{m+1}\al\wedge v_{n}\wedge v_{n+1}\al=(v_{m}\wedge v_{n+1})\wedge v_{m+1}\al\wedge v_{n}\wedge v_{n+1}\al=0$. The latter case can be handled similarly. Moreover, a little thought shows that $v_{\infty}\wedge e_{n}=0$ as well as $u_{\infty}\wedge d_{n}=0$, for any $n\in N$. 

Finally, a direct application of Lemma \ref{lemm: hmmrpsms cntrl} yields 

\[
\alga\cong [0,v_{\infty}]\times [0,e_{0}]\times [0, e_{1}]\times\dots\times\dots
\]

and 
\[
\algb\cong [0,u_{\infty}]\times [0,d_{0}]\times [0,d_{1}]\times\dots\times\dots\ .
\]
 Recall that $\gamma(v_{\infty})= u_{\infty}$ and $\gamma(e_{n})=d_{n+1}$ as well as $\beta(d_{n})=e_{n+1}$, for any $n\in N$. Hence, by Lemma \ref{lemm: hmmrpsms cntrl}, we obtain that $[0,e_{\infty}]\cong [0,\gamma(e_{\infty})]=[0,d_{\infty}]$, $[0, e_{n}]\cong [0,\gamma(e_{n})]=[0,d_{n+1}]$ and  $[0,d_{n}]\cong [0,\beta(d_{n})]=[0,e_{n+1}]$. Thus, in general, we have that $\alga\cong\algb$.
\end{proof}
\quad\\
By virtue of Definition \ref{def: Lnrsmrng}, Theorem \ref{thm: char idl prin centr} and the definition of $\alga_{e}$, with $e\in \mathrm{Ce}(\alga)$ (see \ref{eq:opAe}), if $\alga$ and $\algb$ are \L ukasiewicz near semirings, then they can be regarded as \textit{trivial} principal ideals $[0,1]$ (generated by $1\in \mathrm{Ce}(\alga)\cap\mathrm{Ce}(\algb)$) of $\ida$ and $\idb$, respectively, Hence, we conclude the following:

\begin{corollary} Let $\alga$ and $\algb$ be join $\sigma$-complete \L ukasiewicz near semirings such that $\mathrm{Ce}(\alga)$ and $\mathrm{Ce}(\algb)$ are $\sigma$-complete Boolean algebras. Then, $\alga\cong\algb$ if and only if there are central elements $a\in\mathrm{Ce}(\alga)$ and $b\in\mathrm{Ce}(\algb)$ such that $\alga\cong I(b)$ and $\algb\cong I(a)$. 
\end{corollary}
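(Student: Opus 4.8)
The plan is to obtain the statement as an immediate corollary of Theorem~\ref{thm:cantorbernstein}, the only extra ingredient being the dictionary, established in Section~\ref{sec:2}, between principal ideals generated by central elements and the intervals $[0,e]$ viewed as algebras. First I would record that, for a central element $e$ of a \L ukasiewicz near semiring $\alga$, Theorem~\ref{thm: char idl prin centr} gives $I(e)=[0,e]$ as sets, while by \eqref{eq:opAe} and Theorem~\ref{th: decomposizione Church algebras} the interval $[0,e]$ carries the algebra structure $\alga_{e}$; accordingly ``$\alga\cong I(e)$'' is to be read as ``$\alga\cong\alga_{e}$''. In particular $1$ is always central (the pair $\theta(1,0)=\nabla$, $\theta(1,1)=\Delta$ is a trivial pair of factor congruences), and $I(1)=[0,1]$ is, as an algebra, $\alga$ itself.

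For the implication from right to left, suppose there are $a\in\mathrm{Ce}(\alga)$ and $b\in\mathrm{Ce}(\algb)$ with $\alga\cong I(b)$ and $\algb\cong I(a)$. By the preceding remark this says precisely that $\alga\cong[0,b]$ and $\algb\cong[0,a]$ with $b\in\mathrm{Ce}(\algb)$ and $a\in\mathrm{Ce}(\alga)$, and all the standing hypotheses (join $\sigma$-completeness of $\alga,\algb$ and $\sigma$-completeness of their centres) are exactly those of Theorem~\ref{thm:cantorbernstein}. That theorem therefore yields $\alga\cong\algb$.

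For the converse, assume $\alga\cong\algb$ and put $a=1\in\mathrm{Ce}(\alga)$, $b=1\in\mathrm{Ce}(\algb)$. Then $I(b)=I(1)$ is $\algb$ and $I(a)=I(1)$ is $\alga$, so $\alga\cong I(b)$ and $\algb\cong I(a)$ hold trivially, witnessed by the central elements $a$ and $b$.

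I do not expect a genuine obstacle: the argument is purely a matter of unwinding definitions. The only point that needs a moment's care is the identification of the ideal $I(e)\subseteq A$ with the algebra $\alga_{e}$ whose operations are the ``constrained'' operations of \eqref{eq:opAe}; once this identification is in force --- and it has effectively been made in Section~\ref{sec:2} and in the paragraph preceding the corollary --- the result drops out of Theorem~\ref{thm:cantorbernstein}.
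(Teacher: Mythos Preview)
Your proposal is correct and follows essentially the same route as the paper: identify $I(e)$ with $[0,e]$ via Theorem~\ref{thm: char idl prin centr} and invoke Theorem~\ref{thm:cantorbernstein} for one direction, and take $a=b=1$ for the other. Your extra care in spelling out the identification of $I(e)$ with the algebra $\alga_{e}$ is a harmless (indeed welcome) clarification, but the argument is otherwise the paper's own.
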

\begin{proof}
Suppose that there are central elements $a\in\mathrm{Ce}(\alga)$ and $b\in\mathrm{Ce}(\algb)$ such that $\algb\cong I(a)$ and $\alga\cong I(b)$, respectively. By Theorem \ref{thm: char idl prin centr}, it follows that $I(a)=[0, a]$ as well as $I(b)=[0,b]$. Hence, Theorem \ref{thm:cantorbernstein} ensures that $\alga\cong \algb$. Conversely, if $\alga\cong \algb$, then, upon noticing that $1$ is the greatest element in any \L ukasiewicz near semiring, and it is also central, we have that $\alga\cong [0, 1^{\algb}]=\algb$ and viceversa.
\end{proof}

%Finally, we close this article by observing the fact that 

\subsection*{Aknowledgements}
The research of the I. Chajda is supported by Project I 1923-N25 by Austrian Sci. Fund. (FWF) and Czech Grant Agency (GA\v CR), project 15-34697L . D. Fazio and A. Ledda gratefully acknowledge the support of the Horizon 2020 program of the European Commission: SYSMICS project, Proposal Number: 689176, MSCA-RISE-2015, the support of the Italian Ministry of Scientific Research (MIUR) within the FIRB project ``Structures and Dynamics of Knowledge and Cognition'', Cagliari: F21J12000140001. A. Ledda expresses his gratitude for the support of Fondazione Banco di Sardegna within the project ``Science and its Logics: The RepresentationÕs Dilemma'', Cagliari, Project Number: F72F16003220002. Finally, we express our gratitude to the anonymous reviewers for their valuable suggestions.

%\bibliography{biblio}

\begin{thebibliography}{10}

\bibitem{DiNola13}
L.P. Belluce, A.~Di~Nola, and A.R Ferraioli.
\newblock {MV}-semirings and their sheaf representations.
\newblock {\em Order}, 30(1):165--179, 2013.

\bibitem{BCL}
S.~Bonzio, I.~Chajda, and Ledda A.
\newblock Representing quantum structures as near semi rings.
\newblock {\em Logic Journal of the IGPL}, 4(6):719--742, 2016.

\bibitem{Chajda91}
I.~Chajda.
\newblock {\em Algebraic theory of tolerance relations}.
\newblock Univ. Palack., Olomouc. 1991.

\bibitem{ChajdaL15}
I.~Chajda and H.~L{\"{a}}nger.
\newblock Commutative basic algebras and coupled near semirings.
\newblock {\em Soft Computing}, 19(5):1129--1134, 2015.

\bibitem{ChajdaL}
I.~Chajda and H.~L{\"{a}}nger.
\newblock A representation of basic algebras by coupled right near semirings.
\newblock {\em Acta Sci. Math. (Szegel)}, 81(34):361--374, 2015.

\bibitem{DeSimone}
A.~De~Simone, D.~Mundici, and M.~Navara.
\newblock A Cantor-Bernstein theorem for $\sigma$-complete {MV}-algebras.
\newblock {\em Czechoslovak Mathematical Journal, Vol. 53 (2003), No. 2,
  437-447}.

\bibitem{Freytes}
H.~Freytes.
\newblock An algebraic version of the {C}antor-{B}ernstein-{S}chr\"oder
  theorem.
\newblock {\em Czechoslovak Mathematical Journal, Vol. 54 (2004), Issue 3,
  609-621}.

\bibitem{Gratzer:2011}
George Gr\"{a}tzer.
\newblock {\em {Lattice Theory: Foundation}}.
\newblock Springer Basel, 1st edition. edition, 2011.

\bibitem{Ledda13}
A.~Ledda, F.~Paoli, and A.~Salibra.
\newblock On semi-{B}oolean-like algebras.
\newblock {\em Acta Univ. Palack. Olom.}, 52:101--120, 2013.

\bibitem{GU84}
Gumm~H. P. and Ursini A.
\newblock Ideals in universal algebras.
\newblock {\em Algebra Universalis}, 19:45--54, 1984.

\bibitem{Sal}
A.~Salibra, A.~Ledda, F.~Paoli, and T.~Kowalski.
\newblock Boolean-like algebras.
\newblock {\em Algebra {U}niversalis}, 69(2):113--138, 2013.

\bibitem{Sikorski}
R.~Sikorski.
\newblock A generalization of a theorem of {B}anach and {C}antor-{B}ernstein.
\newblock {\em Colloq. Math., 1 (1948), 140-144 and 242}.

\bibitem{Tarski}
A.~Tarski.
\newblock {\em {Cardinal Algebras}}.
\newblock Oxford University Press, New York, 1949.

\bibitem{Vaggio}
D.~Vaggione.
\newblock Varieties in which the pierce stalks are directly indecomposable.
\newblock {\em Journal of {A}lgebra}, 184:424--434, 1996.

\bibitem{Werner}
H.~Werner.
\newblock A {M}al'cev condition for admissible relations.
\newblock {\em Algebra Universalis, 3 (1973), 263}.

\end{thebibliography}
\end{document}